\title{Idempotent pairs and PRINC domains}
\author{Giulio Peruginelli, Luigi Salce and Paolo Zanardo\footnote{Department of Mathematics, University of Padova, Via Trieste 63,  35121 Padova, Italy. E-mail: gperugin@math.unipd.it, salce@math.unipd.it, pzanardo@math.unipd.it. Research supported by  ``Progetti di Eccellenza 2011/12" of Fondazione CARIPARO and by the grant "Assegni Senior" of the University of Padova.}} 
\numberwithin{equation}{section}
\newtheorem{Proposition}[equation]{Proposition}
\newtheorem{Theorem}[equation]{Theorem}
\newtheorem{Lemma}[equation]{Lemma}
\newtheorem{Corollary}[equation]{Corollary}
\theoremstyle{definition}
\newtheorem{definition}[equation]{Definition}
\newtheorem{Rem}[equation]{Remark}
\theoremstyle{definition}
\def\Z{{\mathbb Z}}
  \def\Q{{\mathbb Q}}
  \def\F{{\mathfrak f}}
\begin{document}

\maketitle

\begin{center}
\emph{{\small Dedicated to Franz Halter-Koch on the occasion of his 70th birthday}}
\end{center}
\vskip0.3cm
\begin{abstract}
\noindent A pair of elements $a,b$ in an integral domain $R$ is an idempotent pair if either $a(1-a) \in bR$, or $b(1-b) \in aR$. $R$ is said to be a PRINC domain if all the ideals generated by an idempotent pair are principal. We show that in an order $R$ of a Dedekind domain every regular prime ideal can be generated by an idempotent pair; moreover, if $R$ is PRINC, then its integral closure, which is a Dedekind domain, is PRINC, too. Hence, a Dedekind domain is PRINC if and only if it is a PID. Furthermore, we show that the only imaginary quadratic orders $\Z[\sqrt{-d}]$, $d > 0$ square-free, that are PRINC and not integrally closed, are for $d=3,7$.
\end{abstract}
\medskip
\noindent{\small \textbf{Keywords}: Orders, Conductor, Primary decomposition, Dedekind domains, Principal ideals, Projective-free.\\ \textbf{2010 Mathematics Subject Classification}: 13G05, 13F05, 13C10, 11R11.}


\section*{Introduction.}

Let $R$ be an integral domain, $M_n(R)$ the ring of matrices of order $n$ with entries in $R$, $T$ any singular matrix in $M_n(R)$ (i.e. $\det T = 0$). A natural question is to find conditions on $R$ to ensure that $T$ is always a product of idempotent matrices. This problem was much investigated in past years, see \cite{SZ} for comprehensive references. The case when $R$ is a B\'ezout domain (i.e. the finitely generated ideals of $R$ are all principal) is crucial. In fact, we recall three fundamental results, valid for matrices with entries in a B\'ezout domain. Laffey \cite{L} showed that every square singular matrix $T$ with entries in an Euclidean domain is a product of idempotent matrices if and only if the same property is satisfied just for $2 \times 2$ matrices. This result was extended  to B\'ezout domains in \cite{SZ}. Arguably, the most important result in this subject is due to Ruitenburg \cite{R}, who proved that every singular matrix $T \in M_n(R)$ is a product of idempotent matrices if and only if every invertible matrix $U \in M_n(R)$ is a product of elementary matrices. From Ruitenburg's theorem and a result by O'Meara \cite{O}, we derive that every singular matrix with entries in a B\'ezout domain $R$ is a product of idempotents if and only if $R$ admits a {\it weak Euclidean algorithm} (see \cite{SZ} for the definitions and other details). As a matter of fact, a major problem in this subject is to establish whether the property that any singular matrix $T$ with entries in $R$ is a product of idempotent matrices implies that $R$ is a B\'ezout domain. 

While investigating this problem in \cite{SZ}, the second and third authors were led to define the property (princ) of an integral domain $R$. We rephrase the property in a way more suitable to our discussion.  

Two elements $a,b$ of a commutative integral domain $R$ are said to form an {\it idempotent pair} if they are the entries of a row, or of a column, of a $2 \times 2$ non-zero idempotent singular matrix. Since a non-zero matrix $T = (a_{ij}) \in M_2(R)$, different from the identity, is idempotent if and only if $\det(T) = 0$ and $T$ has trace $1$, it is easily seen that $a, b \in R$ form an idempotent pair if and only if either $a(1-a) \in bR$, or $b(1-b) \in aR$. We say that an integral domain $R$ satisfies the (princ) property if all the ideals generated by idempotent pairs are principal; such a ring will be called {\it PRINC} domain. The class of PRINC domains obviously includes B\'ezout domains. This class of domains was also investigated by McAdam and Swan in \cite{MS1} and \cite{MS2} under the name UCFD (unique comaximal factorization domain) and from a different point of view (see Remark \ref{McASw}).

In \cite{SZ} it is proved that if a PRINC domain satisfies the condition that $2 \times 2$ singular matrices are product of idempotent matrices, then it is necessarily a B\'ezout domain. A similar result was proved recently in \cite{B} by Bashkara Rao, who assumed the domain $R$ to be projective-free instead than PRINC; recall that  a domain $R$ is projective-free if finitely generated projective $R$-modules are free. Actually, Bashkara Rao's result  is a particular case of the mentioned result of \cite{SZ}, since the class of PRINC domains, as proved in \cite{SZ}, includes, besides B\'ezout domains, the projective-free domains and the unique factorization domains. In \cite{SZ} it was also claimed (without proof) that the ring $\Z[\sqrt{-3}]$ is an example of a non-integrally closed PRINC domain. 

In Section 1 of this paper we provide a characterization of ideals generated by idempotent pairs, related to invertible ideals in domains of finite character and PRINC domains.

In Section 2 we consider orders in rings of algebraic integers. We prove that a prime ideal of such an order $O$ which is comaximal with the conductor of $O$ is generated by an idempotent pair. This fact implies that the maximal ideals of Dedekind domains are generated by idempotent pairs; it follows that a Dedekind domain which is not a PID cannot be a PRINC domain. Another relevant consequence is that if the order $O$ is a PRINC domain, then its integral closure is necessarily a PID. However, concerning the orders $\Z[ \sqrt{-d}]$ in quadratic imaginary extensions of $\Q$ whose integral closures are PIDs, we prove that they are not PRINC domains, when $d=11,19, 43, 67, 163$. 

On the other hand, in Section 3 we prove that the rings $\Z[ \sqrt -3]$ and $\Z[ \sqrt -7]$ are PRINC domains.Therefore, from this point of view, $\Z[ \sqrt -3]$ and $\Z[ \sqrt -7]$ are exceptional among the imaginary quadratic orders $\Z[\sqrt{-d}]$, $d > 0$ square-free. We also prove that the invertible ideals of these two rings are principal, and from this fact we deduce that they are projective-free. Let us note that a first proof that $\Z[ \sqrt -3]$ is a PRINC domain was privately communicated by U. Zannier to the third author; that proof used arguments different from those used in Theorem \ref{primeidealsZsqrt-3-7} of the present paper.

\vskip0.4cm

\section{Ideals generated by an idempotent pair}

In what follows, every ring $R$ considered will be a commutative integral domain. Some results proved in this section are valid also for commutative rings. If $I$ is an ideal of $R$, generated by $x_1, \dots, x_n \in R$, we will use the notation $I = \langle x_1, \dots, x_n \rangle$.

We recall the definitions given in the introduction: $a, b \in R$ are said to be an idempotent pair if either $a(1-a) \in bR$, or $b(1-b) \in aR$, and $R$ satisfies property (princ) if all the ideals generated by idempotent pairs are principal. For short, we will say that $R$ is a PRINC domain.

\begin{Lemma} \label{UFD}
Let $I$ be an invertible ideal $I$ of a UFD $R$. Then $I$ is principal.
\end{Lemma}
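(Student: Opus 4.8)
The plan is to combine the standard fact that an invertible ideal is finitely generated with the existence of greatest common divisors in a UFD. First I would record why $I$ is finitely generated: writing $K = \mathrm{Frac}(R)$ and $I^{-1} = \{x \in K : xI \subseteq R\}$, invertibility gives $a_1,\dots,a_m \in I$ and $b_1,\dots,b_m \in I^{-1}$ with $\sum_k a_k b_k = 1$. Then for every $x \in I$ we have $x = \sum_k (xb_k)a_k$ with each $xb_k \in R$, so $I = \langle a_1,\dots,a_m \rangle$.

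Next I would strip off the gcd. Since $R$ is a UFD, set $d = \gcd(a_1,\dots,a_m)$ and write $a_k = d c_k$ with $\gcd(c_1,\dots,c_m)=1$, so that $I = dJ$ where $J = \langle c_1,\dots,c_m \rangle$. Because $d \neq 0$, the ideal $J$ inherits invertibility from $I$ (indeed $J^{-1} = dI^{-1}$, whence $JJ^{-1} = II^{-1} = R$), and it suffices to prove $J = R$; this will give $I = dR$, principal.

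The heart of the argument is that a finitely generated invertible ideal whose generators have gcd $1$ must be all of $R$, and this is precisely where unique factorization is used. I would show $J^{-1} = R$. Take $x \in J^{-1}$ and write $x = p/q$ with $p,q \in R$, $q \neq 0$, and $\gcd(p,q)=1$ (possible in a UFD). From $x c_k \in R$ we get $q \mid p c_k$, and $\gcd(p,q)=1$ forces $q \mid c_k$ for every $k$; since $\gcd(c_1,\dots,c_m)=1$, the element $q$ is a unit and $x \in R$. Thus $J^{-1} \subseteq R$, and as the reverse inclusion is automatic ($J \subseteq R$), we get $J^{-1} = R$. Invertibility then yields $J = JJ^{-1} = R$, finishing the proof.

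The only genuine obstacle is the lowest-terms cancellation $q \mid p c_k \Rightarrow q \mid c_k$, which rests essentially on unique factorization (to define the gcd, to divide it out, and to use coprimality); the remaining steps are purely formal manipulations of the inverse ideal.
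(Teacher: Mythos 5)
Your proposal is correct and follows essentially the same route as the paper: reduce to generators with gcd $1$, show via the lowest-terms argument that the inverse ideal lies in $R$, and conclude from $II^{-1}=R$ that the ideal is all of $R$. The only differences are cosmetic — you argue directly (factoring $I = dJ$) where the paper argues by contradiction on the minimal number of generators, and you spell out the finite generation and the invertibility of $J$, which the paper leaves implicit in its ``without loss of generality.''
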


\begin{proof}
Assume, for a contradiction, that the minimal number of generators of $I$ is $n \ge 2$, say $I = \langle x_1, \dots, x_n \rangle$ (in particular, $I$ is a proper ideal). We may assume, without loss of generality, that $GCD(x_1, \dots, x_n) = 1$. Take any element $y/z \in I^{-1}$, where $GCD(y,z) = 1$. Then, from $x_i y/z \in R$ for all $i \le n$, it follows that $z $ divides $x_i$ for every $i \le n$. We conclude that $z$ is a unit of $R$, hence $I^{-1} \subseteq R$, since $y/z$ was arbitrary. Then $R = I I^{-1} \subseteq I$, impossible. \qed
\end{proof}

Recall that two ideals $I$ and $J$ of a commutative ring $R$ are said to be comaximal if $I+J=R$.

\begin{Lemma}\label{referee}
Let $I$ and $J$ be two comaximal ideals of a commutative ring $R$. Then there exists an element $a\in I$ such that $a-1\in J$, implying that $a^2-a\in IJ$ and $I=aR+IJ$.
\end{Lemma}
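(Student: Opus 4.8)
The plan is to extract the element $a$ directly from the comaximality hypothesis and then verify the two consequences by elementary manipulations. Since $I+J=R$, we have $1\in I+J$, so I can write $1=a+b$ with $a\in I$ and $b\in J$. This single decomposition is the engine of the whole argument: it immediately gives $a\in I$ and $a-1=-b\in J$, which is the existence claim. In other words, there is nothing to find beyond unwinding the definition of comaximality.

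For the relation $a^2-a\in IJ$, I would simply factor $a^2-a=a(a-1)$. Since $a\in I$ and $a-1\in J$, the product $a(a-1)$ is one of the generating products of the product ideal $IJ$, hence lies in $IJ$. This is the place where one must be careful to record which factor belongs to $I$ and which to $J$; once that bookkeeping is fixed, the membership is immediate from the definition of $IJ$ as the ideal generated by $\{xy : x\in I,\ y\in J\}$.

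For the identity $I=aR+IJ$, the inclusion $aR+IJ\subseteq I$ is clear, because $a\in I$ forces $aR\subseteq I$, while $IJ\subseteq I$ always holds. For the reverse inclusion I would take an arbitrary $x\in I$ and multiply the decomposition $1=a+b$ by $x$, obtaining $x=xa+xb$; here $xa\in aR$ and $xb\in IJ$ (since $x\in I$ and $b\in J$), so $x\in aR+IJ$, giving $I\subseteq aR+IJ$ and hence equality. There is no genuine obstacle in this lemma: everything follows formally from the existence of the single element $a$ realizing the comaximality, and the only real care needed is tracking the roles of $I$ and $J$ when invoking the definition of the product ideal.
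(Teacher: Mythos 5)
Your proof is correct and follows essentially the same route as the paper: both extract $a$ from $1=a+b$ with $a\in I$, $b\in J$, and both get $a^2-a=a(a-1)\in IJ$ immediately. The only cosmetic difference is in verifying $I=aR+IJ$: the paper uses the ideal identity $aR+IJ=aR+I(aR+J)=aR+I=I$ (exploiting comaximality of $aR$ and $J$), while you multiply $1=a+b$ through by an arbitrary $x\in I$ to get $x=xa+xb\in aR+IJ$ --- the same computation carried out elementwise.
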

\begin{proof}
Since $I+J=R$, there exists an element $a\in I$ such that $a-1\in J$. Hence $a^2-a=a(a-1)\in IJ$. Also, $aR+IJ=aR+I(aR+J)=aR+I=I$, since $aR$ and $J$ are comaximal. Thus $I=aR+IJ$. Similarly, $J=(a-1)R+IJ$. \qed
\end{proof}

The next theorem gives different characterizations for ideals generated by an idempotent pair.

\begin{Theorem}\label{idempotents}
Let $I$ be a non-zero ideal of an integral domain $R$. The following conditions are equivalent:
\begin{enumerate}
\item $I$ is generated by an idempotent pair.
\item $I=\langle a,b\rangle=\langle a^2,b\rangle$, for some $a,b\in R$.
\item There exists an ideal $J$ such that $I$ and $J$ are comaximal and such that $IJ$ is a principal ideal.
\end{enumerate}
In particular, if one of the above equivalent conditions holds, then $I$ is invertible.
\end{Theorem}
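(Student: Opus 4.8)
The plan is to prove the cycle $(1)\Rightarrow(2)\Rightarrow(1)$ together with the equivalence $(1)\Leftrightarrow(3)$, and then to read off invertibility from condition $(3)$. Throughout I will exploit the symmetry between $a$ and $b$ in the definition of an idempotent pair, so that whenever $I$ is generated by an idempotent pair I may assume $a(1-a)\in bR$. For $(1)\Rightarrow(2)$ this is immediate: from $a-a^2=a(1-a)\in bR$ we get $a\in\langle a^2,b\rangle$, whence $\langle a,b\rangle=\langle a^2,b\rangle$, the reverse inclusion being obvious.

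For the equivalence with $(3)$, in the direction $(1)\Rightarrow(3)$ I would try the natural partner ideal $J=\langle 1-a,b\rangle$. It is comaximal with $I$ since $a+(1-a)=1\in I+J$, and I claim $IJ=\langle b\rangle$: writing $a(1-a)=bc$, each of the four generators $a(1-a),ab,b(1-a),b^2$ of $IJ$ lies in $bR$, so $IJ\subseteq bR$, while conversely $b=ab+(1-a)b\in IJ$. For $(3)\Rightarrow(1)$ I would invoke Lemma~\ref{referee}: from $I,J$ comaximal with $IJ=\langle t\rangle$ principal, the lemma furnishes $a\in I$ with $a-1\in J$, $a^2-a\in IJ=\langle t\rangle$ and $I=aR+IJ=\langle a,t\rangle$; then $a(1-a)\in tR$ exhibits $(a,t)$ as an idempotent pair generating $I$.

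The step I expect to be the main obstacle is $(2)\Rightarrow(1)$, because the generators handed to us by $(2)$ need not themselves form an idempotent pair: from $I=\langle a,b\rangle=\langle a^2,b\rangle$ one only knows $a=a^2r+bs$ for suitable $r,s\in R$, and there is no reason for $a(1-a)$ to lie in $bR$. The remedy is to replace $a$ by $a_0=ar$. A direct computation gives $a_0(1-a_0)=ar-a^2r^2=r(a-a^2r)=rbs\in bR$, so $(a_0,b)$ is genuinely an idempotent pair; and since $a=a^2r+bs=a\cdot a_0+bs$, we have $a\in\langle a_0,b\rangle$, together with $a_0=ar\in I$, forcing $\langle a_0,b\rangle=I$. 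Finally, for the ``in particular'' clause I would use $(3)$: when $IJ=\langle t\rangle$ with $t\neq0$, the fractional ideal $t^{-1}J$ satisfies $I\,(t^{-1}J)=t^{-1}\langle t\rangle=R$, so $I$ is invertible; the degenerate case $t=0$ forces $J=0$ and hence $I=R$ by comaximality, which is invertible as well.
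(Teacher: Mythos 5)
Your proof is correct and follows essentially the same route as the paper: the same partner ideal $J=\langle 1-a,b\rangle$ for $(1)\Rightarrow(3)$, the same use of Lemma~\ref{referee} to come back from $(3)$, and the same replacement $a\mapsto ar$ (the paper's $\lambda a$) to fix the generators in $(2)\Rightarrow(1)$. The only differences are organizational — you prove $(1)\Leftrightarrow(2)$ and $(1)\Leftrightarrow(3)$ separately where the paper runs the cycle $(1)\Rightarrow(3)\Rightarrow(2)\Rightarrow(1)$ — plus a slightly more careful treatment of invertibility, including the degenerate case $I=R$.
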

\begin{proof} $(1)\Rightarrow (3)$: Suppose that $I=\langle a, b \rangle$, where $a,b$ is an idempotent pair, so $a^2-a=bc$, for some $c\in R$. Let $J=\langle a-1, b \rangle$. Then $I$ and $J$ are comaximal and we have
$$
IJ=\langle a, b \rangle \langle a-1, b \rangle =\langle bc, ab, b(a-1), b^2 \rangle = b \langle c, a, a-1,  b \rangle = b R,
$$

$(3)\Rightarrow (2)$: By assumption there exists an ideal $J$ such that $I+J=R$ and $IJ=bR$, for some $b\in R$. By Lemma \ref{referee}, there exists $a\in I$ such that $I=\langle a, b \rangle$, with $a^2-a\in bR$. In particular, it follows that $\langle a, b \rangle=\langle a^2, b \rangle$.

$(2)\Rightarrow (1)$: Since $\langle a, b \rangle=\langle a^2, b \rangle$, there exist $\lambda,\mu \in R$ such that $a=\lambda a^2+\mu b$. This implies that $a-\lambda a^2 =a(1-\lambda a)=\mu b$, so $\lambda a(1-\lambda a)=\lambda\mu b$, and $\lambda a$, $ b$ form an idempotent pair. Obviously, $\langle \lambda a , b \rangle \subseteq I$. Conversely, since $a\in \langle \lambda a, b \rangle $, also $a^2\in  \langle \lambda a, b \rangle$, consequently $I= \langle  a^2 , b \rangle \subseteq \langle \lambda a, b \rangle$. So $I= \langle \lambda a, b \rangle$.

The last claim follows immediately from the characterization at the point (3).  \qed
\end{proof}

Recall that $R$ is said to be a B\'ezout domain if every finitely generated ideal of $R$ is principal, and $R$ is called projective-free if every finitely generated projective $R$-module is free. 
\begin{Proposition} \label{Bezout}
If an integral domain $R$ is either B\'ezout, or UFD, or projective-free, then $R$ satisfies property (princ).
\end{Proposition}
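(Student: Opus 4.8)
The plan is to reduce all three cases to the structural information already extracted in Theorem \ref{idempotents}. The key observation is that an ideal $I$ generated by an idempotent pair is, by definition, generated by the two elements $a,b$ of the pair, hence finitely generated; and by the final assertion of Theorem \ref{idempotents}, such an $I$ is invertible. With these two facts in hand, each of the three classes of domains yields the conclusion almost immediately, so the proposition splits into three short arguments.

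First I would dispose of the B\'ezout case. Since $I=\langle a,b\rangle$ is finitely generated, the defining property of a B\'ezout domain (every finitely generated ideal is principal) gives at once that $I$ is principal. No use of invertibility is needed here.

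Next, the UFD case. Here I would invoke the last claim of Theorem \ref{idempotents} to conclude that $I$ is invertible, and then apply Lemma \ref{UFD} verbatim: an invertible ideal of a UFD is principal. This case is therefore immediate once invertibility is recorded.

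Finally, for the projective-free case I would again use that $I$ is invertible, hence a finitely generated projective $R$-module; since $R$ is projective-free, $I$ is free. The main (and only mildly delicate) point is to check that a nonzero free ideal of an integral domain must have rank one and is thus principal: any two elements $x,y\in I$ satisfy the nontrivial $R$-linear relation $yx-xy=0$, so a basis of $I$ can contain at most one element, whence $I\cong R$ and any generator of this free rank-one module generates $I$ as an ideal. This little rank argument is the only step requiring a small verification; everything else is a direct appeal to Theorem \ref{idempotents} and Lemma \ref{UFD}.
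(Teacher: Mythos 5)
Your proof is correct and follows essentially the same route as the paper: the B\'ezout case is immediate from finite generation, the UFD case is Lemma \ref{UFD} applied to the invertible ideal, and the projective-free case runs invertible $\Rightarrow$ finitely generated projective $\Rightarrow$ free $\Rightarrow$ principal, exactly as in the paper's proof. The only difference is that you spell out the rank-one verification (any two nonzero elements of a domain satisfy the relation $y\cdot x - x\cdot y = 0$, so a free ideal has a basis of one element), a detail the paper compresses into ``such $I$ is free, hence principal.''
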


\begin{proof}
A B\'ezout domain is trivially a PRINC domain. Moreover, by Theorem \ref{idempotents}, every ideal $I$ of $R$ generated by an idempotent pair is invertible. Then such $I$ is free, hence principal, when $R$ is projective-free. Finally, if $R$ is a UFD, then $I$ is principal by Lemma \ref{UFD}. \qed
\end{proof}

\begin{Corollary} \label{generalcase}
Let $R$ be an integral domain, $b$ an element of $R$ such that $bR$ is a finite product of primary ideals that are pairwise comaximal. Let $Q$ be such a primary ideal and let $P$ denote its radical. Then there exists $a \in Q$ such that $a, b$ form an idempotent pair, and $P$ is the radical of $\langle a, b \rangle$.
\end{Corollary}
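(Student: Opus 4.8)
The plan is to deduce the statement from the equivalence $(3) \Leftrightarrow (1)$ of Theorem \ref{idempotents}, applied directly to the ideal $Q$. Write $bR = Q_1 \cdots Q_n$ with the $Q_i$ pairwise comaximal primary ideals, and suppose $Q = Q_1$, so that $P = \sqrt{Q_1}$. Set $J = Q_2 \cdots Q_n$, the product of the remaining factors. The whole argument will then amount to checking that $Q$ and $J$ satisfy condition (3), and extracting the desired element $a$ from the proof of Lemma \ref{referee}.

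First I would verify that $Q$ and $J$ are comaximal. By hypothesis $Q_1 + Q_j = R$ for every $j \ge 2$, and comaximality with each of several ideals forces comaximality with their product; this follows by combining the relations $Q_1 + Q_j = R$ two at a time, e.g.\ $R = (Q_1 + Q_2)(Q_1 + Q_3) \subseteq Q_1 + Q_2 Q_3$, and iterating. Hence $Q + J = R$. Moreover, since $R$ is commutative, $QJ = Q_1 Q_2 \cdots Q_n = bR$, which is principal. Thus condition (3) of Theorem \ref{idempotents} holds for $I = Q$, with witnessing ideal $J$ and principal product $bR$.

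It then remains to produce the element $a$ with the stated properties. Applying Lemma \ref{referee} to the comaximal pair $Q, J$ (as is done in the proof of the implication $(3)\Rightarrow(2)$), I obtain $a \in Q$ with $a^2 - a \in QJ = bR$ and $Q = aR + QJ = \langle a, b\rangle$. The relation $a^2 - a \in bR$ is precisely $a(1-a) \in bR$, so $a, b$ form an idempotent pair with $a \in Q$, as required; and since $\langle a, b\rangle = Q$, its radical is exactly $\sqrt{Q} = P$. I do not expect a genuine obstacle here, as the corollary is essentially a bookkeeping consequence of the earlier results. The only points needing care are the comaximality reduction above and the verification that the generator $b$ furnished by Lemma \ref{referee} coincides with the $b$ fixed at the outset; the latter is automatic, since $QJ = bR$ supplies precisely that generator.
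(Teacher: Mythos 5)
Your proof is correct and takes essentially the same route as the paper: both set $J$ equal to the product of the remaining primary factors, observe that $Q+J=R$ and $QJ=bR$, and then invoke Lemma \ref{referee} (equivalently, condition (3) of Theorem \ref{idempotents}) to produce $a\in Q$ with $a(1-a)\in bR$ and $Q=\langle a,b\rangle$. The only difference is that the paper leaves implicit the standard fact, which you verify explicitly, that an ideal comaximal with each of several ideals is comaximal with their product.
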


\begin{proof} By assumption, $bR=QJ$, where $Q$ and $J$ are comaximal. Hence, by  Lemma \ref{referee} (or Theorem \ref{idempotents}, (3)), there exists $a\in Q$ such that $a,b$ is an idempotent pair and $Q=\langle a,b\rangle$. \qed
\end{proof}

In the case of a domain of finite character, the last claim of Theorem \ref{idempotents} can be reversed. We recall that a domain is said to be of finite character if each non-zero element is contained in finitely many maximal ideals; moreover, an invertible ideal $I$ of a domain of finite character is $1\frac{1}{2}$ generated, that is, $I$ is generated by two elements and one of the two generators can be arbitrarily chosen among the non-zero elements of the ideals (see \cite[Proposition 2.5, (e), p. 12]{FS}). 

\begin{Corollary} Let $R$ be an integral domain of finite character and $I$ an invertible ideal. Then $I$ is generated by an idempotent pair.
\end{Corollary}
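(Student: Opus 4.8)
The plan is to verify condition (3) of Theorem \ref{idempotents}: I will construct an integral ideal $J$ that is comaximal with $I$ and such that $IJ$ is principal, and then invoke the implication $(3)\Rightarrow(1)$ to conclude that $I$ is generated by an idempotent pair.

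First I would record the finitely many maximal ideals involved. Since $I$ is a nonzero ideal and $R$ has finite character, only finitely many maximal ideals $P_1,\dots,P_n$ can contain $I$, because each of them contains a fixed nonzero element of $I$. As $I$ is invertible it is locally principal, so each localization $I_{P_i}$ is free of rank one over $R_{P_i}$ and each quotient $I/P_iI$ is a one-dimensional vector space over the residue field $R/P_i$.

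The crucial step is to produce a single $b\in I$ that generates $I$ locally at every $P_i$ simultaneously. Here I would use that the submodules $P_1I,\dots,P_nI$ of $I$ are pairwise comaximal (from $P_iI+P_jI=(P_i+P_j)I=I$ when $i\neq j$), so the Chinese Remainder Theorem yields a surjection $I\twoheadrightarrow\prod_i I/P_iI$; I then pick $b\in I$ with nonzero image in each factor, and Nakayama's lemma guarantees $bR_{P_i}=I_{P_i}$ for all $i$. With such a $b$ in hand, I set $J:=bI^{-1}$: since $b\in I$ we get $J\subseteq II^{-1}=R$, so $J$ is an integral ideal, while $IJ=b(II^{-1})=bR$ is principal. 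Finally I would check comaximality locally: at a maximal ideal $P\neq P_i$ we have $I\not\subseteq P$, and at $P=P_i$ the equality $I_{P_i}=bR_{P_i}$ forces $J_{P_i}=b(bR_{P_i})^{-1}=R_{P_i}$; hence $I+J$ lies in no maximal ideal and $I+J=R$.

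I expect the main obstacle to be exactly this middle step — manufacturing one element $b$ that is a local generator of $I$ at all the relevant maximal ideals at once. This is precisely where finite character is indispensable: it cuts the problem down to finitely many maximal ideals, so a single CRT (or prime-avoidance) argument suffices. This is also the content underlying the $1\frac{1}{2}$-generation property recalled before the statement, and once $b$ is found the remaining verifications are routine.
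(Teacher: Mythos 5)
Your proof is correct, but it takes a genuinely different route from the paper's. The paper's proof is two lines: it invokes the $1\frac{1}{2}$-generation property of invertible ideals in domains of finite character (\cite[Proposition 2.5]{FS}), applies it with the chosen element $a^2$ (for an arbitrary $0\neq a\in I$) to obtain $b\in I$ with $\langle a^2,b\rangle=I$, and then observes $I=\langle a^2,b\rangle\subseteq\langle a,b\rangle\subseteq I$, so that condition (2) of Theorem \ref{idempotents} holds. You instead verify condition (3): you reprove from scratch the local-generation fact that underlies $1\frac{1}{2}$-generation (CRT plus Nakayama, using finite character to cut down to the finitely many maximal ideals containing $I$), and then take $J=bI^{-1}$, checking that $IJ=bR$ and that $I+J=R$ by a local argument. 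What the paper's approach buys is brevity: it delegates exactly the step you single out as the main obstacle to a cited result. What yours buys is self-containedness: it makes visible where finite character enters, and it amounts to an in-line proof of (the weak form needed of) the cited proposition.

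One small repair is needed in your CRT step, whose justification is stated too generally. Pairwise comaximality of submodules ($N_i+N_j=M$ for $i\neq j$) does \emph{not} in general imply that $M\to\prod_i M/N_i$ is surjective: three distinct lines in $k^2$ are pairwise comaximal subspaces, yet the target has dimension $3>2$. What saves you is that your submodules are $P_iI$ with the $P_i$ pairwise comaximal \emph{ideals}: then $P_iI+\bigl(\prod_{j\neq i}P_j\bigr)I=\bigl(P_i+\prod_{j\neq i}P_j\bigr)I=I$, which is exactly what the inductive proof of CRT requires; equivalently, tensor the ring isomorphism $R/\prod_i P_i\cong\prod_i R/P_i$ with $I$ to get $I/\bigl(\prod_i P_i\bigr)I\cong\prod_i I/P_iI$, so the composite $I\to\prod_i I/P_iI$ is surjective. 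With that rephrasing, the rest of your argument (Nakayama at each $P_i$, integrality of $J=bI^{-1}$, principality of $IJ$, and the local check of comaximality) goes through verbatim.
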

\begin{proof}
Choose $0\not=a\in I$. By the aforementioned Proposition 2.5 in \cite{FS}, there exists $b\in I$ such that $\langle a^2, b \rangle =I$. Now, $I=\langle a^2, b \rangle \subseteq \langle a, b \rangle\subseteq I$, so each of the previous containments is indeed an equality. By Theorem \ref{idempotents}, $I$ is generated by an idempotent pair. \qed
\end{proof}

As an application to domains of finite character, we derive the following 

\begin{Corollary}\label{PruferfinitePrinc}
Let $R$ be a domain with finite character. Then $R$ is a PRINC domain if and only if all invertible ideals are principal. In particular, if $R$ is also Pr\"ufer, then $R$ is a B\'ezout domain.
\end{Corollary}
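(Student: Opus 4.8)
The plan is to exploit the two-way relationship between invertibility and being generated by an idempotent pair that the preceding results already establish. The last claim of Theorem~\ref{idempotents} shows that \emph{any} ideal generated by an idempotent pair is automatically invertible, with no hypothesis beyond $R$ being a domain. Conversely, the immediately preceding Corollary shows that when $R$ has finite character, \emph{every} invertible ideal is generated by an idempotent pair. Thus, under the finite-character hypothesis, the two classes of ideals---those generated by idempotent pairs and the invertible ones---coincide, and the whole statement becomes a matter of unwinding the definition of (princ).

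For the main equivalence I would argue both directions directly. Suppose first that $R$ is a PRINC domain and let $I$ be an invertible ideal. By the preceding Corollary, $I$ is generated by an idempotent pair, so property (princ) forces $I$ to be principal. Conversely, suppose that every invertible ideal of $R$ is principal, and let $I$ be any ideal generated by an idempotent pair. By the last claim of Theorem~\ref{idempotents}, $I$ is invertible, hence principal by hypothesis; since this holds for every such $I$, the domain $R$ satisfies (princ), i.e.\ is PRINC.

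For the final assertion I would recall that in a Pr\"ufer domain every nonzero finitely generated ideal is invertible. So if $R$ is moreover Pr\"ufer, then the equivalence just proved shows that every invertible ideal of $R$ is principal; combining these two facts, every finitely generated ideal of $R$ is principal, which is exactly the definition of a B\'ezout domain.

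I do not expect a genuine obstacle here: the mathematical content is entirely carried by Theorem~\ref{idempotents} and the finite-character Corollary that precedes the statement, and the result is essentially the observation that together these identify property (princ) with the implication ``invertible $\Rightarrow$ principal''. The only point deserving care is keeping straight which direction requires finite character (invertible $\Rightarrow$ idempotent pair) and which holds in any domain (idempotent pair $\Rightarrow$ invertible), so that the finite-character hypothesis is invoked precisely where it is needed and not elsewhere.
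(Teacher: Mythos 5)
Your proposal is correct and follows essentially the same route as the paper: the necessity (PRINC $\Rightarrow$ invertible ideals principal) comes from the preceding Corollary on finite-character domains, and the sufficiency from the fact that ideals generated by idempotent pairs are invertible---the paper cites \cite[Proposition 4.2]{SZ} for this direction, but your direct derivation from the last claim of Theorem~\ref{idempotents} is the same content. Your explicit treatment of the Pr\"ufer case (finitely generated ideals are invertible, hence principal) is exactly the standard argument the paper leaves implicit.
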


\begin{proof}
The sufficiency holds for any domain $R$, by \cite[Proposition 4.2]{SZ}. The necessity follows from the preceding proposition. \qed
\end{proof}

\begin{Rem}\label{McASw}
McAdam and Swan defined the notion of an $S$-ideal $I$ in \cite{MS1} as in condition 2. of Theorem \ref{idempotents}, in the context of a definition analogue to that of unique factorization domain. We recall this definition here. Let $R$ be an integral domain. A nonzero non-unit element $b$ of $R$ is pseudo-irreducible if it is not possible to factor $b$ as $b=cd$ with $c$ and $d$ comaximal non-units. The domain  $R$ is called {\it comaximal factorization domain} (CFD) if any nonzero non-unit element $b$ has a complete comaximal factorization, namely, $b=b_1\cdot\ldots\cdot b_m$, where the $b_i$'s are pairwise comaximal pseudo-irreducible elements of $R$. A CFD is a {\it unique comaximal factorization domain} (UCFD) if complete comaximal factorizations are unique. In \cite[Theorem 1.7]{MS1} they show that if $R$ is a  CFD, then $R$ is UCFD if and only if every $S$-ideal is principal, that is, $R$ is a PRINC domain, by  Theorem \ref{idempotents}. They prove also that a domain with finite character is a CFD (\cite[Lemma 1.1]{MS1}), from which our Corollary \ref{PruferfinitePrinc} also follows.
\end{Rem}

\section{Orders in number fields and idempotent pairs}\label{orders}

We recall the definition of order.

\begin{definition}
An integral domain $O$ is an order if its integral closure $D$ in its quotient field is a Dedekind domain which is finitely generated as an $O$-module.
\end{definition}

By a well-known result of Eakin (\cite{Ea}), it follows that $O$ is Noetherian as well. So, an order is a one-dimensional Noetherian domain. We say that an order is proper if it is not integrally closed. We recall that a Dedekind domain is characterized by the fact that each ideal can be written uniquely as an intersection, or equivalently as a product, of powers of prime ideals. On the other hand, since an order is a one-dimensional Noetherian domain, each ideal of an order can be written uniquely as a product of primary ideals (see for example \cite[Theorem 9, Chapt. IV, \S. 5, p. 213]{ZS1}). It follows that in a proper order there are some primary ideals of the order $O$ which are not equal to a power of a prime ideal. 
We recall that the conductor of the integral closure $D$ of an order $O$ is defined as
$$
\mathfrak{f}\doteqdot(O:D)=\{x\in O\mid xD\subseteq O\}.
$$
The conductor is the largest ideal of $O$ which is also an ideal of $D$. Since $D$ is a finitely generated $O$-module, $\mathfrak{f}$ is non-zero. Following the terminology of \cite{Neu}, we call an  ideal of $O$ (or of $D$) comaximal with $\mathfrak{f}$ a {\it regular} ideal.  

We will need the following easy fact.

\begin{Lemma} \label{Neu}
Let $O$ be an order and $P$ a prime ideal. Then the set of $P$-primary ideals of $O$ is linearly ordered if and only if $P$ is regular. If this condition holds, then each $P$-primary ideal is equal to a power of $P$. 
\end{Lemma}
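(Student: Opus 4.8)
## Proof Proposal for Lemma \ref{Neu}

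The plan is to analyze the $P$-primary ideals of $O$ by localizing at $P$ and distinguishing the two cases according to whether $P$ contains the conductor $\mathfrak{f}$. The key structural fact I would exploit is that $O$ is a one-dimensional Noetherian domain, so $P$ is a maximal ideal, and the localization $O_P$ is a one-dimensional Noetherian local domain whose maximal ideal is $PO_P$. The $P$-primary ideals of $O$ correspond bijectively (via contraction and extension) to the $PO_P$-primary ideals of $O_P$, so the question of whether they are linearly ordered, and whether each is a power of $P$, can be transferred to the local ring $O_P$.

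First I would treat the regular case. If $P$ is comaximal with $\mathfrak{f}$, then $P$ does not contain $\mathfrak{f}$, so there is an element of $\mathfrak{f}$ not in $P$; this element becomes a unit in $O_P$, whence $\mathfrak{f}O_P = O_P$. Since the conductor localizes to the whole ring, the localization $O_P$ coincides with the localization $D_{P'}$ of the integral closure at the corresponding prime, and is therefore a discrete valuation ring (a one-dimensional regular local ring). In a DVR the ideals are totally ordered and are precisely the powers of the maximal ideal, so each $PO_P$-primary ideal is a power of $PO_P$; contracting back to $O$ shows that the $P$-primary ideals of $O$ are linearly ordered and each equals a power of $P$. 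This simultaneously establishes the ``if'' direction and the final claim.

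For the converse I would argue contrapositively: if $P$ is not regular, then $P \supseteq \mathfrak{f}$, so $P$ is one of the singular primes, and I want to produce two $P$-primary ideals that are incomparable. The natural source of such ideals is the failure of $O_P$ to be a DVR: since $O$ is a proper order at $P$, the local ring $O_P$ is a one-dimensional Noetherian local domain that is \emph{not} integrally closed, hence not a DVR, so its maximal ideal is not principal and it has at least two generators. Using the remark recorded in the preamble that in a proper order there exist primary ideals which are not powers of the corresponding prime, together with the fact that a one-dimensional local domain whose ideals are linearly ordered is necessarily a valuation ring (and being Noetherian, a DVR, hence integrally closed), I would conclude that the $P$-primary ideals cannot be linearly ordered at a singular $P$.

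The main obstacle I expect is the converse direction: one must actually exhibit, or at least guarantee the existence of, two incomparable $P$-primary ideals at a singular prime, rather than merely asserting that $O_P$ is not a valuation ring. The cleanest route is to prove the general principle that a one-dimensional Noetherian local domain has linearly ordered ideals \emph{if and only if} it is a DVR, and then combine this with the fact that $O_P$ is integrally closed precisely when $P$ is regular. Once this equivalence is in place, both directions of the lemma follow formally, so the real work is concentrated in identifying $O_P$ with a DVR exactly in the regular case and in the valuation-ring characterization of linear orderings of ideals.
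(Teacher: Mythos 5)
Your proposal is correct and follows essentially the same route as the paper: localize at $P$, use the bijection between $P$-primary ideals of $O$ and $PO_P$-primary ideals of $O_P$ (every non-zero proper ideal of which is $PO_P$-primary), and observe that linear ordering of the ideals means exactly that $O_P$ is a valuation domain, hence a DVR, which happens precisely when $P$ is regular. The only difference is that the paper outsources the equivalence ``$O_P$ is a valuation ring if and only if $P$ is regular'' to a citation (Neukirch, Proposition 12.10), whereas you sketch a direct proof of it by localizing the conductor and identifying $O_P$ with the localization of the integral closure; both are adequate.
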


\begin{proof}
Since the local ring $O_{P}$ is a local one-dimensional noetherian ring, every non-zero ideal is $PO_P$-primary. We recall that there is a one-to-one correspondence between $P$-primary ideals of $O$ and $PO_P$-primary ideals of $O_P$. Hence, the set of $P$-primary ideals of $O$ is linearly ordered if and only if the set of ideals of $O_P$ is linearly ordered, that is, $O_P$ is a valuation domain. By \cite[Proposition 12.10]{Neu} this condition is equivalent to the fact that $P$ is a regular prime. 

If one of these equivalent conditions holds, then $O_P$ is a DVR, so its ideals are powers of the maximal ideal $PO_P$. Hence, the $P$-primary ideals of $O$ are powers of $P$. \qed
\end{proof}

The following result is a consequence of Corollary \ref{generalcase}, since an order is a domain of finite character; we include a direct proof, which makes use of a different technique.

\begin{Proposition}\label{primeidealsorderPrinc}
Let $O$ be an order and $P$ a regular prime ideal of $O$. Then there exists an idempotent pair that generates $P$. In particular, if $O$ is integrally closed, then every prime ideal is generated by an idempotent pair.
\end{Proposition}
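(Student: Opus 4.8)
The plan is to reduce to the characterization of Theorem \ref{idempotents}(3): it suffices to find an ideal $J$ comaximal with $P$ such that $PJ$ is principal. Since $P$ is regular, Lemma \ref{Neu} guarantees that $O_P$ is a DVR, so $PO_P$ is principal. I would first choose $b\in P$ with $v_P(b)=1$, i.e. $bO_P=PO_P$; such a $b$ exists because $P\subseteq P^2O_P$ would give $PO_P=P^2O_P$, forcing $PO_P=0$ by Nakayama, a contradiction. The purpose of this choice is that the $P$-primary component of the principal ideal $bO$, which is the contraction $bO_P\cap O=PO_P\cap O$, equals $P$ exactly.

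Next, since $O$ is one-dimensional and Noetherian, I would take the primary decomposition $bO=P\cap Q_1\cap\cdots\cap Q_k$, where each $Q_i$ is primary with maximal radical $P_i\neq P$. In dimension one such primary ideals are automatically pairwise comaximal, since $\sqrt{Q_i+Q_j}=\sqrt{P_i+P_j}=O$, and likewise $P+Q_i=O$. Setting $J:=Q_1\cap\cdots\cap Q_k=Q_1\cdots Q_k$, comaximality of $P$ with each factor gives $P+J=O$, and therefore $PJ=P\cap J=bO$ is principal.

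To extract the idempotent pair, I would use the Chinese Remainder Theorem to split $O/bO\cong O/P\times O/J$. Under this isomorphism the image of $P$ in $O/bO$ is the ideal generated by the idempotent $(0,1)$; lifting that idempotent to $a\in O$ gives $a\equiv 0\pmod P$ and $a\equiv 1\pmod J$, so that $a(a-1)\in P\cap J=bO$ and $P=aO+bO=\langle a,b\rangle$. Thus $a,b$ is an idempotent pair generating $P$. (This $a$ is the same element produced by Lemma \ref{referee} for $I=P$; the novelty is reading it off the splitting of $O/bO$, which makes transparent the name idempotent pair.) Finally, if $O$ is integrally closed then $O$ is Dedekind with conductor $\mathfrak{f}=(O:O)=O$, so every prime $P$ satisfies $P+\mathfrak{f}=O$ and is regular; hence the first part applies to all prime ideals.

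I expect the crux to be the first step: ensuring that the $P$-primary component of $bO$ is precisely $P$, rather than some higher power $P^{(n)}$. This is exactly where regularity is indispensable, for it is the DVR structure of $O_P$ from Lemma \ref{Neu} that simultaneously supplies a uniformizer $b$ with $v_P(b)=1$ and forces $bO_P\cap O=P$. For a non-regular prime the $P$-primary ideals are not linearly ordered and $O_P$ is not a valuation ring, so one cannot peel off $P$ as a single comaximal factor of a principal ideal, and this approach breaks down.
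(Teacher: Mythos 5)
Your proof is correct, but it takes a different route from the paper's, and in fact it is essentially the streamlined argument that the authors themselves record in the Remark immediately following Proposition \ref{primeidealsorderPrinc}. The paper constructs $b$ globally, by the Chinese Remainder Theorem applied to $P^2$ and the primes $P_1,\dots,P_k$ of the conductor $\mathfrak{f}$: imposing $b\equiv x \pmod{P^2}$ (with $x\in P\setminus P^2$) and $b\equiv 1\pmod{P_i}$ makes $bO$ regular, so that \emph{every} primary component of $bO$ is a power of a regular prime by Lemma \ref{Neu}; it then invokes Corollary \ref{generalcase} and uses $b\notin P^2$ once more to upgrade ``$P$ is the radical of $\langle a,b\rangle$'' to ``$P=\langle a,b\rangle$''. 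You instead control only the $P$-primary component: the DVR structure of $O_P$ (the content of the proof of Lemma \ref{Neu}) supplies $b$ with $v_P(b)=1$, so that $bO_P\cap O=P$ exactly, and you then lump all remaining primary components --- about which you need to know nothing, not even regularity --- into a single ideal $J$ comaximal with $P$, landing directly in condition (3) of Theorem \ref{idempotents}. What the paper's version buys is the explicit factorization of $bO$ into regular prime powers, which fits its running emphasis on regular ideals (and is reused in spirit in Corollary \ref{OPrinc}); what yours buys is economy: the conductor never enters, and the final identity $P=aO+bO$ falls out of Lemma \ref{referee} (or your CRT idempotent-lifting) without any extra appeal to Lemma \ref{Neu}. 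One pedantic point common to both arguments: they implicitly assume $P\neq 0$ (your Nakayama contradiction needs $PO_P\neq 0$, the paper needs $P\setminus P^2\neq\emptyset$); this is harmless, since in a proper order a regular prime is automatically nonzero, and in the integrally closed case the zero ideal is trivially generated by the idempotent pair $0,0$.
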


\begin{proof}
Let
$$
\mathfrak{f}=\prod_{i=1}^k Q_i
$$
be the primary decomposition of $\mathfrak{f}$ in $O$, where the $Q_i$'s  are primary ideals of $O$ with distinct radicals $P_i$, $i=1,\ldots,k$. Now take $x \in P \setminus P^2$. Since $P$ is comaximal with $\mathfrak{f}$, there exists $b\in O$ which satisfies the following conditions:
\begin{align*}
b \equiv &\;x \pmod{P^2} \nonumber\\
b\equiv &\;1\pmod{P_i},\quad \forall \ i=1,\ldots,k.
\end{align*}
In particular, $bO$ is comaximal with $\mathfrak{f}$, hence the primary components of $bO$ are comaximal with the conductor, so, in particular, they are regular. Hence, by Lemma \ref{Neu}, we get a primary decomposition of $bO$ of the form
\begin{equation*}
bO=P \prod_{j=1}^n {P'_j}^{e_j}
\end{equation*}
where the $P'_j$'s are prime (i.e., maximal) ideals of $O$ distinct from the $P_i$'s.

We are in the position to apply Corollary \ref{generalcase}, hence there exists $a \in P$ such that $a, b$ form an idempotent pair and $P$ is the radical of $\langle a , b \rangle$. Since $\langle a , b \rangle$ is a product of primary ideals, $b \notin P^2$ and $P$ is regular, by Lemma \ref{Neu} it follows that $P = \langle a, b \rangle$, as required.

Finally, note that $O$ is integrally closed if and only if $\mathfrak{f}=O$. In this case, every prime ideal of $O$ is comaximal with the conductor, so every prime ideal can be generated by an idempotent pair. \qed
\end{proof}

\begin{Rem} 
The congruences in the proof of Proposition \ref{primeidealsorderPrinc} are not necessary. Indeed, the crucial conditions are:

(i) the $P$-primary component of $bO$ is regular.

(ii) $b\in P\setminus P^2$. 

In fact, if
$$
bO= P \prod_{j=1}^n Q_j
$$
where $Q_j$ are primary ideals of $O$ (not necessarily regular), we may take $a \in O$ satisfying the conditions:
\begin{align*}
a\equiv &\; b\pmod{P}\\
a\equiv &\;b+1\pmod{Q_j},\;\forall j=1,\ldots,k
\end{align*}
and the same conclusion follows: $a(1-a) \in b O$, and $P$ is the only primary ideal that contains both $a$ and $b$, so that $\langle a, b \rangle = P$, since $b \notin P^2$ and the $P$-primary ideals are regular. 

On the other hand, if $P$ were not regular we may not get the same conclusion, even imposing the condition $a\equiv b\pmod{P}$, since in this case the $P$-primary ideals are not linearly ordered. We still get that $\langle a, b \rangle$ is $P$-primary, though. We will see in Section 4 that the conductor of the order $\Z[\sqrt{-3}]$ is an instance of this phenomenon.

\end{Rem}

\begin{Corollary}\label{OPrinc}
If an order $O$ is a PRINC domain, then every regular ideal $I$ of $O$ is principal. 
\end{Corollary}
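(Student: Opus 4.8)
The plan is to combine Proposition \ref{primeidealsorderPrinc} with the fact that an order is a one-dimensional Noetherian domain, so that every ideal admits a unique decomposition as a product of primary ideals. First I would record the immediate consequence of the PRINC hypothesis: since every regular prime ideal $P$ of $O$ is generated by an idempotent pair by Proposition \ref{primeidealsorderPrinc}, and $O$ is PRINC, every regular prime ideal is principal. The task is then to propagate this from regular primes to an arbitrary regular ideal.

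Given a regular ideal $I$, I would write its unique primary decomposition $I=\prod_{j}Q_j$, with the $Q_j$ primary of distinct radicals $P_j$. The key observation is that each $P_j$ is itself regular: since $I$ is comaximal with the conductor $\mathfrak{f}$ and $I\subseteq Q_j\subseteq P_j$, we cannot have $\mathfrak{f}\subseteq P_j$ (otherwise $P_j\supseteq I+\mathfrak{f}=O$), and as $P_j$ is maximal in the one-dimensional domain $O$ this forces $P_j+\mathfrak{f}=O$. Now Lemma \ref{Neu} applies to each regular prime $P_j$: every $P_j$-primary ideal is a power of $P_j$, so $Q_j=P_j^{e_j}$ for suitable exponents $e_j$. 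Hence $I=\prod_j P_j^{e_j}$, a product of powers of regular primes.

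To conclude, each $P_j$ is principal by the first step, say $P_j=\pi_j O$, and therefore
$$
I=\prod_j (\pi_j O)^{e_j}=\Bigl(\prod_j \pi_j^{e_j}\Bigr)O
$$
is principal, since a product of principal ideals is principal. I expect the only point requiring care to be the verification that regularity descends from $I$ to the radicals $P_j$ of its primary components; once that is in place, Lemma \ref{Neu} reduces the problem to regular primes, and the passage from the primes back to the full ideal is purely formal, as products of principal ideals are trivially principal. There is thus no serious computational obstacle beyond assembling these ingredients in the correct order.
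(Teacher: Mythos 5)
Your proof is correct and follows essentially the same route as the paper's: decompose the regular ideal $I$ into primary components, use Lemma \ref{Neu} to identify each component with a power of its (regular) radical, apply Proposition \ref{primeidealsorderPrinc} together with the PRINC hypothesis to make those radicals principal, and conclude that $I$ is a product of principal ideals. The only difference is that you spell out the verification that regularity descends from $I$ to the radicals $P_j$, which the paper states without proof; your argument for that step is valid.
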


\begin{proof}
Let $I \subset O$ be a regular ideal. In particular, the primary components of $I$ are regular primary ideals. By Lemma \ref{Neu}, each of these primary components is equal to a power of its own radical. By Proposition \ref{primeidealsorderPrinc}, each of these radicals is generated by a suitable idempotent pair, hence they are all principal, since $O$ is a PRINC domain. Hence, all the primary components of $I$ are principal as well. It follows that $I$ is equal to a product of principal ideals, so it is principal.  \qed
\end{proof}

The next corollary is a consequence of Proposition \ref{primeidealsorderPrinc}; it also follows from Corollary \ref{PruferfinitePrinc}.

\begin{Corollary}\label{DedekindPrinc}
A Dedekind domain is PRINC if and only if it is a PID.
\end{Corollary}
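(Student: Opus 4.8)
The plan is to prove that a Dedekind domain $R$ is PRINC if and only if it is a PID. The forward direction (PID $\Rightarrow$ PRINC) is immediate, since a PID is a B\'ezout domain and Proposition \ref{Bezout} tells us every B\'ezout domain satisfies property (princ). So the substantive content is the converse: if a Dedekind domain is PRINC, then it is a PID.

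For the converse I would exploit that a Dedekind domain is, in particular, an integrally closed order, so its conductor is the whole ring and every prime ideal is regular. Concretely, I would argue as follows. Let $R$ be a Dedekind domain satisfying (princ). By Proposition \ref{primeidealsorderPrinc} (applied with $O = R$, which is integrally closed, so $\mathfrak{f} = R$ and every prime is regular), every prime ideal $P$ of $R$ is generated by an idempotent pair. Since $R$ is PRINC, each such $P$ is therefore principal. Now invoke the defining structural property of Dedekind domains: every nonzero ideal factors as a product of prime ideals. Since every prime ideal is principal and a product of principal ideals is principal, every nonzero ideal of $R$ is principal, whence $R$ is a PID.

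Alternatively, I note the excerpt remarks that this also follows from Corollary \ref{PruferfinitePrinc}: a Dedekind domain is Pr\"ufer and of finite character, so by that corollary being PRINC forces it to be B\'ezout; a Noetherian B\'ezout domain is a PID. Either route works, but the first is cleaner and self-contained given what precedes.

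I do not anticipate a genuine obstacle here, since all the heavy lifting has already been done in Proposition \ref{primeidealsorderPrinc} and Proposition \ref{Bezout}; the only point requiring a moment's care is the verification that a Dedekind domain qualifies as an integrally closed order so that Proposition \ref{primeidealsorderPrinc} applies, and this is exactly the hypothesis that a Dedekind domain equals its own integral closure. The proof is thus essentially a two-line assembly of the preceding results.
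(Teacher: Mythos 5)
Your proof is correct and follows exactly the route the paper intends: the paper states this corollary as an immediate consequence of Proposition \ref{primeidealsorderPrinc} (every prime ideal of an integrally closed order is generated by an idempotent pair, hence principal under (princ), hence the Dedekind domain is a PID), with Corollary \ref{PruferfinitePrinc} as the alternative route --- both of which you identify. Your write-up merely fills in the details the paper leaves implicit, including the observation that a Dedekind domain is itself an integrally closed order.
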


\begin{Corollary}\label{OPrincDPrinc}
If an order  $O$  is a PRINC domain then its integral closure $D$ is a PRINC domain, or, equivalently, a PID. 
\end{Corollary}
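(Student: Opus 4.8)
The plan is to reduce the statement to showing that $D$ is a PID, and then to transport principality from $O$ to $D$ along the conductor-coprime ideal correspondence, using Corollary \ref{OPrinc}. Since $O$ is an order, its integral closure $D$ is by definition a Dedekind domain; hence by Corollary \ref{DedekindPrinc} it suffices to prove that $D$ is a PID, and the ``equivalently'' clause of the statement is then automatic from that same corollary. To prove that $D$ is a PID I would show that its ideal class group is trivial.

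First I would recall the classical correspondence for an order $O$ with integral closure $D$ and conductor $\mathfrak{f}$ (see \cite[Ch.~I, \S12]{Neu}): extension $\mathfrak{a}\mapsto \mathfrak{a}D$ and contraction $\mathfrak{A}\mapsto \mathfrak{A}\cap O$ are mutually inverse bijections between the regular (i.e.\ comaximal with $\mathfrak{f}$) fractional ideals of $O$ and those of $D$. Let $\mathfrak{A}$ be an integral ideal of $D$ comaximal with $\mathfrak{f}$. Then $\mathfrak{a}:=\mathfrak{A}\cap O$ is an integral ideal of $O$, it is again regular (if $\mathfrak{f}=\mathfrak{f}\cap O\subseteq \mathfrak{a}$, then $\mathfrak{f}\subseteq\mathfrak{A}$, contradicting regularity of $\mathfrak{A}$), and $\mathfrak{a}D=\mathfrak{A}$. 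Since $O$ is a PRINC domain, Corollary \ref{OPrinc} gives that $\mathfrak{a}$ is principal, say $\mathfrak{a}=aO$; hence $\mathfrak{A}=\mathfrak{a}D=aD$ is principal as well.

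It remains to descend from regular ideals to arbitrary ideal classes. Here I would invoke the standard approximation fact in a Dedekind domain that every ideal class of $D$ admits an integral representative comaximal with the fixed nonzero ideal $\mathfrak{f}$. Combined with the previous step, every class of $D$ is then represented by a principal ideal, so the class group of $D$ is trivial and $D$ is a PID, as required. The crux of the argument --- and the only point requiring care --- is the regular-ideal correspondence $\mathfrak{a}D=\mathfrak{A}$ together with the existence of $\mathfrak{f}$-coprime class representatives; both are classical, and if a self-contained treatment were preferred they could be recovered by localizing at the finitely many primes dividing $\mathfrak{f}$, where $O$ and $D$ agree.
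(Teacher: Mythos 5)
Your proof is correct, and its core transport step coincides with the paper's: a regular ideal of $D$ contracts to a regular ideal of $O$, which is principal by Corollary \ref{OPrinc}, and extends back to the original ideal, which is therefore principal. Where you genuinely diverge is in how non-regular ideals are disposed of. The paper works prime by prime: only finitely many primes of $D$ contain the conductor, and by Claborn's theorem (\cite[Corollary 1.6]{Cla}) a Dedekind domain which is not a PID has infinitely many non-principal prime ideals, so principality of the regular primes already forces $D$ to be a PID. You instead invoke the classical moving lemma: every ideal class of a Dedekind domain contains an integral ideal comaximal with the fixed nonzero ideal $\mathfrak{f}$, so triviality of the classes of regular ideals kills the whole class group. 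Both reductions are valid; yours uses only standard approximation in Dedekind domains, while the paper's leans on Claborn's less elementary (but citable) result and gets to deal exclusively with prime ideals. One caveat in your write-up: the parenthetical argument that $\mathfrak{a}=\mathfrak{A}\cap O$ is regular only establishes $\mathfrak{f}\not\subseteq\mathfrak{a}$, which for a non-prime ideal $\mathfrak{a}$ is strictly weaker than comaximality $\mathfrak{a}+\mathfrak{f}=O$. The fact itself is immediate, though: writing $1=x+f$ with $x\in\mathfrak{A}$ and $f\in\mathfrak{f}$, the inclusion $\mathfrak{f}\subseteq O$ gives $x=1-f\in\mathfrak{A}\cap O=\mathfrak{a}$, hence $\mathfrak{a}+\mathfrak{f}=O$; this is part of the correspondence the paper cites as \cite[Lemma 2.26, p. 389]{PZ} in Remark \ref{idealscoprimeconductor}, so you should either give this one-line verification or lean on that citation rather than the non-containment argument.
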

\begin{proof}
We will show that each prime ideal $P$ of $D$ is principal.  Without loss of generality, we may just consider the case of a regular prime ideal $P$. In fact, there are only finitely many prime ideals of $D$ that divide the conductor, and, by Claborn \cite[Corollary 1.6]{Cla}, a Dedekind domain which is not a PID has an infinite number of non-principal prime ideals. Since $P\cap O$ is regular, too (see the remark below), it follows by Corollary \ref{OPrinc}  that $P\cap O$ is principal. Since $P$ is an extended ideal, $P=(P\cap O)D$, so $P$ is principal, too. \qed
\end{proof}

\begin{Rem}\label{idealscoprimeconductor}
By the arguments of the proof of Corollary \ref{OPrinc}, the following conditions are equivalent:
\begin{itemize}
\item[1)] each regular ideal of $O$ is principal.
\item[2)] each regular prime ideal of $O$ is principal.
\end{itemize}
If one of the two conditions holds, then $D$ is a PID, exactly by the same argument of the proof of Corollary \ref{OPrincDPrinc}: each regular prime ideal $P$ of $D$ is extended, so it is equal to the extension of its contraction, which is principal. More generally,  there is a 1-1 correspondence between the regular ideals of $D$ and the regular ideals of $O$ (see for example \cite[Lemma 2.26, p. 389]{PZ}). In particular, each regular ideal of $O$ is contracted and each regular ideal of $D$ is extended.
\end{Rem}

\section{$\Z [\sqrt {{- d}}]$ not satisfying (princ).}

Let $\eta = \sqrt {{- d}} $, where $d > 0$ is a square-free integer. 
In this section, we want to establish when the order $O = \Z [\eta]$ fails to be a PRINC domain. By Corollary \ref{OPrincDPrinc}, we know that $O$ cannot be PRINC if its integral closure $D$ is not a PID. So, it is enough to examine the cases when $D$ is a PID, namely when $d = 1, 2, 3, 7, 11, 19, 43, 67, 163$ (see, for instance, \cite{Ri}, p. 81). Of course, when $d = 1, 2$ we get $O = D$, hence $O$ is trivially PRINC. We will focus on the remaining cases. 

The next proposition is valid for any $d > 0$.

\begin{Proposition}\label{nonprincipalideal}
Let $O = \Z[\eta]$, with $\eta = \sqrt{-d}$, where $d > 0$ is any integer (not necessarily square-free). Then we have:

(1) The element $1 + \eta$ is irreducible in $O$.

(2) If $a \in \Z\setminus\{\pm1\}$ properly divides $1 + d$, then $\langle 1 + \eta, a \rangle$ is a proper non-principal ideal of $O$.  
\end{Proposition}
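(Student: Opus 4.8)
The plan is to use the norm form $N(x+y\eta)=x^2+dy^2$ for $x,y\in\Z$, which is multiplicative and, since $d>0$, takes non-negative integer values. The units of $O$ are exactly the elements of norm $1$; because $d>0$ this forces $y=0$ and $x=\pm1$ when $d\ge 2$ (for $d=1$ one also gets $\pm\eta$), so in every case a nonzero non-unit of $O$ has norm at least $2$. I record the identity $N(1+\eta)=1+d$, which drives both parts.

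For (1), I would suppose $1+\eta=\alpha\beta$ with $\alpha=x+y\eta$ and $\beta=u+v\eta$ both non-units, and compare coefficients: reading off the coefficient of $\eta$ gives $xv+yu=1$ and the constant term gives $xu-dyv=1$. First I dispose of the degenerate cases: if $y=0$ then $\alpha=x\in\Z$ and the product is $xu+xv\eta$, forcing $xu=1$, whence $x=\pm1$ and $\alpha$ is a unit; the symmetric computation rules out $v=0$. Thus $y,v\ne 0$, so $N(\alpha)=x^2+dy^2\ge d$ and likewise $N(\beta)\ge d$, and multiplicativity gives $1+d=N(\alpha)N(\beta)\ge d^2$. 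This forces $d=1$, the only remaining case, which I then eliminate using that non-units have norm $\ge2$: their product would be $\ge4>2=1+d$. Hence no such factorization exists and $1+\eta$ is irreducible.

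For (2), set $I=\langle 1+\eta,a\rangle$. The key step is to identify the quotient $O/I$. Writing $O=\Z[X]/(X^2+d)$, the relation $1+\eta\in I$ amounts to $\eta\mapsto -1$, so every class in $O/I$ is represented by an integer; concretely I would check that $\psi(x+y\eta)=x-y \bmod a$ defines a ring homomorphism $O\to\Z/a\Z$ — this is well defined \emph{precisely} because $\eta^2=-d\equiv 1\pmod a$, which is the hypothesis $a\mid 1+d$. Since $a\in I$ gives $a\Z\subseteq I\cap\Z$, the quotient $O/I$ is a quotient of $\Z/a\Z$; combined with the surjection induced by $\psi$ this yields $O/I\cong\Z/a\Z$. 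In particular $[O:I]=|a|\ge2$, so $I$ is a proper ideal.

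Finally I would show $I$ is not principal. If $I=\langle\theta\rangle$ with $\theta=p+q\eta$, then the index equals the norm, $[O:I]=N(\theta)=p^2+dq^2$, so $p^2+dq^2=|a|$. The case $q=0$ is impossible, since then $I=pO$ and $1+\eta\in pO$ would force $p\mid 1$, contradicting that $I$ is proper. Hence $q\ne0$ and $N(\theta)\ge d$, so $|a|\ge d$. On the other hand $a$ is a \emph{proper} divisor of $1+d$, so $1<|a|<1+d$ and therefore $|a|\le (1+d)/2$. For $d\ge 2$ this gives $d\le |a|\le (1+d)/2<d$, a contradiction; for $d=1$ no proper divisor $a\in\Z\setminus\{\pm1\}$ of $1+d=2$ exists, so the statement is vacuous. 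Thus $I$ is non-principal. The main obstacle I anticipate is the quotient computation $O/I\cong\Z/a\Z$, where one must verify that the substitution $\eta\mapsto -1$ is consistent modulo $a$ — exactly the point at which $a\mid 1+d$ enters; once $[O:I]=|a|$ is in hand, the index-equals-norm formula reduces non-principality to the elementary inequality between $|a|$ and $d$.
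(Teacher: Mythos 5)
Your proof is correct, and while part (1) follows essentially the paper's norm argument (you are in fact more careful: the paper's claim that taking norms forces $y=0$ or $t=0$ is literally false for $d=1$ in $\Z[i]$, where it is rescued by the norm-one factor being a unit, a case you handle explicitly), your part (2) takes a genuinely different route. The paper proves properness by noting that all generators of $\langle 1+\eta,a\rangle\langle 1-\eta,a\rangle$ lie in $aO$ because $a\mid 1+d$, so $I=O$ would force $O=I\bar I\subseteq aO$, a contradiction; and it proves non-principality by invoking part (1): a generator of $I$ would have to be an associate of the irreducible element $1+\eta$, whence $a=(1+\eta)(x+\eta y)$ and the norm inequality $a^2=(1+d)(x^2+dy^2)\ge (1+d)^2>a^2$ gives a contradiction. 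You instead compute the quotient: the map $x+y\eta\mapsto x-y \pmod a$ is a ring homomorphism precisely because $a\mid 1+d$, its kernel is $I$, so $O/I\cong\Z/a\Z$; properness follows from $[O:I]=|a|\ge 2$, and non-principality from the index-norm formula $[O:\theta O]=N(\theta)$ combined with the elementary bound $d\le |a|\le (1+d)/2$, impossible for $d\ge 2$ and vacuous for $d=1$. Your route buys two things: part (2) becomes independent of part (1), and the exact identification of $O/I$ is stronger than what the paper extracts here --- in particular it recovers the paper's Lemma \ref{prime ideal} (that $\langle p,1+\eta\rangle$ is prime when $p$ is a prime dividing $1+d$) as the special case where $a$ is prime, by essentially the same computation the paper performs there with $\Z[X]/(X^2+d)$. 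The paper's argument is shorter and purely ideal-theoretic; yours leans on the standard fact that the index of a nonzero principal ideal equals the absolute norm of its generator, which you should justify in one line (it is the determinant of multiplication by $\theta$ on the $\Z$-basis $1,\eta$).
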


\begin{proof}
(1) Assume that $1 + \eta = (x + \eta y) (z + \eta t)$ for suitable $x, y, z, t \in \Z$. Taking norms, we get
$$
1 + d = (x^2 + d y^2)(z^2 + d t^2),
$$
which implies that $y=0$ or $t=0$. Assuming that $y=0$, it follows that $x\in\Z$ divides $1+\eta\in\Z[\sqrt{-d}]$, which implies that $x=\pm1$.

\medskip

(2) Firstly, let us show that $I=\langle 1 + \eta, a \rangle$ is a proper ideal of $O$. Assuming that $I$ is not proper, we obtain that
$$O=\langle 1 + \eta, a \rangle\langle 1 - \eta, a \rangle\subseteq aO$$
which is a contradiction.

Let us see that $I$ is not principal. Otherwise, we should get $I = (1 + \eta)O$, since $1 + \eta$ is irreducible, hence, in particular, $a = (1 + \eta)(x + \eta y)$, where $x, y \in \Z$, $y\not=0$. But this is impossible, since 
$$a^2 = (1 + d) (x^2 + d y^2) \ge (1+ d)^2 > a^2.$$
\qed
\end{proof}

It follows from Proposition \ref{nonprincipalideal} that, if $1 + d = a (a-1)$ for some $a \in \Z$, $a\not=-1$, then $O$ does not satisfy (princ). For example, $\Z[\sqrt{-11}]$ and $\Z[\sqrt{-19}]$ do not satisfy (princ). However, we will prove below a stronger result (Proposition \ref{non-princ}). 

Like Proposition \ref{nonprincipalideal}, also the next lemma is valid for any $d > 0$, not necessarily square-free.

\begin{Lemma}\label{prime ideal}
In the above notation, if $p \in \Z$ is a prime which divides $1 + d$, then the ideal $\langle p,1+\eta \rangle$ of $ \Z[\eta]$ is prime.
\end{Lemma}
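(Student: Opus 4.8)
The plan is to show that the quotient ring $\Z[\eta]/\langle p, 1+\eta\rangle$ is a field, or equivalently an integral domain, which will establish that the ideal is maximal, hence prime. The natural approach is to compute this quotient explicitly using the presentation $\Z[\eta]\cong\Z[X]/\langle X^2+d\rangle$, where $\eta$ corresponds to the class of $X$. Under this identification, the ideal $\langle p, 1+\eta\rangle$ corresponds to $\langle p, 1+X\rangle$ inside $\Z[X]/\langle X^2+d\rangle$, and so the quotient is isomorphic to
$$
\Z[X]/\langle X^2+d,\; p,\; 1+X\rangle.
$$

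First I would simplify this presentation by imposing the relations in a convenient order. Reducing modulo $\langle p, 1+X\rangle$ forces $X\equiv -1$, so every polynomial collapses to its value at $X=-1$ read modulo $p$; concretely, the quotient is $\Z[X]/\langle p, 1+X\rangle \cong \Z/p\Z$, a field. The only remaining relation to account for is $X^2+d$, whose image under $X\mapsto -1$ is $1+d$. Here I would invoke the hypothesis that $p$ divides $1+d$: this means $1+d\equiv 0\pmod p$, so the relation $X^2+d$ is already satisfied in $\Z/p\Z$ and imposes nothing new. Hence the quotient is exactly $\Z/p\Z$, which is a field since $p$ is prime, and therefore $\langle p, 1+\eta\rangle$ is a maximal (in particular prime) ideal.

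The cleanest way to write this rigorously is to define a surjective ring homomorphism $\varphi:\Z[\eta]\to\Z/p\Z$ and identify its kernel with $\langle p, 1+\eta\rangle$. Since $\Z[\eta]\cong\Z[X]/\langle X^2+d\rangle$, giving such a homomorphism amounts to sending $X$ to an element $\alpha\in\Z/p\Z$ with $\alpha^2+d\equiv 0\pmod p$; I would take $\alpha=-1$, which works precisely because $p\mid 1+d$. This $\varphi$ is clearly surjective, and its kernel plainly contains both $p$ and $1+\eta$. For the reverse inclusion one checks that modulo $\langle p, 1+\eta\rangle$ every element of $\Z[\eta]$ is congruent to an ordinary integer (using $\eta\equiv -1$) and then modulo $p$, so the kernel is no larger than $\langle p, 1+\eta\rangle$.

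The step I expect to require the most care is the verification that the kernel equals $\langle p, 1+\eta\rangle$ rather than something strictly larger, i.e. that these two generators already cut the quotient down to $\Z/p\Z$. This is where the divisibility hypothesis $p\mid 1+d$ is genuinely used: without it, the relation $\eta^2=-d$ combined with $\eta\equiv -1$ would force $1\equiv -d$, i.e. $p\mid 1+d$ anyway, so the condition is exactly what makes the two reductions compatible and prevents the quotient from collapsing to the zero ring. Once compatibility is confirmed, the conclusion that $\Z/p\Z$ is a field, and hence the ideal is prime, is immediate.
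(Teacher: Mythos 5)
Your proof is correct and takes essentially the same approach as the paper: both pass to the presentation $\Z[\eta]\cong\Z[X]/\langle X^2+d\rangle$ and use the hypothesis $p\mid 1+d$ to see that $X^2+d$ already lies in $\langle p,1+X\rangle$, so the quotient collapses to $\Z[X]/\langle p,1+X\rangle\cong\Z/p\Z$. The paper's identity $X^2+d=(X+1)(X-1)+pb$ is exactly your compatibility check that evaluation at $X=-1$ kills $X^2+d$ modulo $p$.
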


\begin{proof}
Since $1+d=pb$, for some $b\in\Z$, then $X^2+d=(X+1)(X-1)+pb\in \langle p,1+X \rangle \Z[X]$. Hence, if $\pi:\Z[X]\to \Z[\eta]$ is the canonical homomorphism sending $X$ to $\eta$, then $\pi^{-1}(\langle p,1+\eta \rangle \Z[\eta])= \langle p,1+X \rangle \Z[X]$, since the latter ideal contains $\langle X^2+d \rangle $, the kernel of $\pi$. Therefore, 
$$
\frac{\Z[\eta]}{\langle p,1+\eta \rangle}\cong \frac{\Z[X]}{ \langle p,1+X \rangle} \cong \Z/p\Z.
$$
\qed
\end{proof}

\begin{Proposition} \label{non-princ}
Let $d \in \{ 11, 19, 43, 67, 163 \}$. Then $O = \Z[\sqrt{-d}]$ does not satisfy (princ).
\end{Proposition}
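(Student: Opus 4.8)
The plan is to exhibit, for each of the five values of $d$, a single ideal that is generated by an idempotent pair yet is not principal; the existence of such an ideal contradicts (princ). The tool for producing idempotent pairs will be the equivalence $(1)\Leftrightarrow(3)$ of Theorem \ref{idempotents}: an ideal $I$ is generated by an idempotent pair precisely when there is an ideal $J$, comaximal with $I$, such that $IJ$ is principal. The natural candidate for $J$ is the complex-conjugate ideal, and the natural candidate for $I$ is one of the non-principal ideals already produced in Proposition \ref{nonprincipalideal}.

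Concretely, I would first record the arithmetic fact that drives the whole argument: for every $d\in\{11,19,43,67,163\}$ one has $1+d=4p$ with $p$ an odd prime, namely $p=3,5,11,17,41$ respectively. Fixing such a $p$, set $I=\langle 1+\eta,p\rangle$ and $J=\langle 1-\eta,p\rangle$. Comaximality of $I$ and $J$ is immediate, since $I+J\ni(1+\eta)+(1-\eta)=2$ and $I+J\ni p$, and $\gcd(2,p)=1$ because $p$ is odd, so $1\in I+J$. For the product I would compute $IJ=\langle(1+\eta)(1-\eta),\,p(1+\eta),\,p(1-\eta),\,p^2\rangle=\langle 4p,\,p(1+\eta),\,p(1-\eta),\,p^2\rangle=p\,\langle 4,1+\eta,1-\eta,p\rangle$, using $(1+\eta)(1-\eta)=1+d=4p$. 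The inner ideal again contains $2$ and $p$, hence equals $O$, so $IJ=pO$ is principal. By Theorem \ref{idempotents} the ideal $I=\langle 1+\eta,p\rangle$ is therefore generated by an idempotent pair.

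It remains to see that $I$ is not principal, and this is exactly Proposition \ref{nonprincipalideal}(2) applied with $a=p$: the odd prime $p$ lies in $\Z\setminus\{\pm1\}$ and properly divides $1+d=4p$, so $\langle 1+\eta,p\rangle$ is a proper non-principal ideal. Thus $I$ is a non-principal ideal generated by an idempotent pair, and $O$ fails (princ). The main point---and the step that the elementary criterion of the remark following Proposition \ref{nonprincipalideal} (which only disposes of $d=11,19$, where $1+d=a(a-1)$) cannot reach---is the collapse $IJ=pO$: it is the oddness of $p$, forcing $2$ and $p$ to generate the unit ideal, that makes the conjugate ideal a legitimate witness for condition (3) of Theorem \ref{idempotents}. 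I would note in passing that one could instead invoke Lemma \ref{prime ideal} to see that $\langle p,1+\eta\rangle$ is a (regular) prime and conclude via Proposition \ref{primeidealsorderPrinc}, but the conjugate-ideal computation is self-contained and avoids any discussion of the conductor.
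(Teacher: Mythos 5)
Your proof is correct, and it reaches the conclusion by a genuinely more elementary route than the paper. Both arguments hinge on the same ideal $I=\langle p,1+\eta\rangle$, where $p$ is the odd prime with $1+d=4p$, and both quote Proposition \ref{nonprincipalideal}(2) to get non-principality; the difference lies entirely in how $I$ is shown to be generated by an idempotent pair. The paper first proves that $I$ is prime (Lemma \ref{prime ideal}), observes that it is regular because $2\in\mathfrak{f}$ while $p$ is odd, and then invokes Proposition \ref{primeidealsorderPrinc}, whose proof rests on the conductor, primary decomposition and Lemma \ref{Neu}. You instead verify condition (3) of Theorem \ref{idempotents} directly, taking the conjugate ideal $J=\langle 1-\eta,p\rangle$ as the witness: comaximality follows since $2,p\in I+J$ and $p$ is odd, and the norm identity $(1+\eta)(1-\eta)=1+d=4p$ gives $IJ=p\,\langle 4,1+\eta,1-\eta,p\rangle=pO$. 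Your computation is self-contained --- it needs neither the primality of $I$ nor any mention of the conductor or regular ideals --- and it is in effect an explicit instance of the implication $(1)\Rightarrow(3)$ in Theorem \ref{idempotents}, with the conjugate ideal playing the role of $\langle a-1,b\rangle$ there. What the paper's route buys in exchange is structural information exploited in the discussion immediately after Proposition \ref{non-princ}: it exhibits $P$ as a \emph{regular prime} ideal of the order that is non-principal yet extends to a principal ideal of the integral closure, precisely the phenomenon later contrasted with the behaviour of $\Z[\sqrt{-3}]$ and $\Z[\sqrt{-7}]$. Your side remarks are also accurate: the elementary criterion $1+d=a(a-1)$ indeed covers only $d=11,19$, and the alternative path through Lemma \ref{prime ideal} and Proposition \ref{primeidealsorderPrinc} that you mention in passing is exactly the paper's proof.
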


\begin{proof}
Note that for each of the relevant $d$ there exists a prime $p \ne 2$ that properly divides $1+d$. Lemma \ref{prime ideal} shows that $P=\langle p,1+\eta \rangle$ is a prime ideal of $O$, which is comaximal with the conductor $\mathfrak{f}$, since $2 \in \F$ and $p$ is odd. Then $P$ is generated by an idempotent pair, by Proposition \ref{primeidealsorderPrinc}. Moreover, since $p$ properly divides $1+d$ (which is the norm of $1+\eta$), by Proposition \ref{nonprincipalideal} the same ideal $\langle p,1+\eta \rangle$ is not principal. We conclude that $O$ does not satisfy (princ). \qed
\end{proof}

The argument in the proof of Proposition \ref{non-princ} neither applies to $\Z[\sqrt{-3}]$ nor to $\Z[\sqrt{-7}]$, since $1+3=4$ and $1+7=8$ are not divisible by an odd prime. In fact, we note that the above proof shows that, in the orders $\Z[\sqrt{-d}]$ with $d \in \{ 11, 19, 43, 67, 163 \}$, there are regular prime ideals that are not principal, but become principal after extending them to their integral closures, which are PIDs. This means that the generator of such an extended ideal lies in the integral closure but not in the corresponding (proper) order. This phenomenon does not happen with the orders $\Z[\sqrt{-3}]$ and $\Z[\sqrt{-7}]$, as we will see in the next section.

\section{$\Z[\sqrt{-3}]$ and $\Z[\sqrt{-7}]$ are PRINC domains.}\label{Princ}

We start recalling some well-known facts on $\Z[\sqrt{-d}] = \Z[\eta]$.
Let $d \in\Z$ be a positive square-free integer, which is congruent to $3$ modulo $4$. Then the ring of integers of $\Q(\eta)$ is $D=\Z[\frac{1+\eta}{2}]$. The conductor $\mathfrak f$ of $\Z[\frac{1+\eta}{2}]$ into the order $O=\Z[\eta]$ is equal to:
$$
\mathfrak{f}= 2D = \langle 2, 1+\eta \rangle O \subset O.
$$
Clearly, the conductor is a maximal ideal of $O$, since the quotient ring $O/\mathfrak{f}$ is isomorphic to $\Z/2\Z$. As an ideal of $O$, $\mathfrak{f}$ is not principal. In fact, Proposition \ref{nonprincipalideal} applies to $\mathfrak f = \langle 2, 1+\eta \rangle$, since $2$ properly divides $1+d$. More generally, it is straghtforward to show that the conductor of a proper order is always not principal. Moreover, a simple computation shows that $\mathfrak{f}^2=2\mathfrak{f}$; it follows that $\mathfrak{f}$ is not  invertible, since $2$ does not generate $\mathfrak{f}$.

The next technical lemma will be a main ingredient for the proof of the following Theorem \ref{primeidealsZsqrt-3-7}. We denote by $D^*$ the multiplicative group of the units of a domain $D$.

\begin{Lemma}\label{sqrt-3}
Let $D=\Z[\frac{1+\sqrt{-3}}{2}]$ and $O=\Z[\sqrt{-3}]$. Then for each $z\in D$, there exists a unit $u\in D^*$ such that $zu\in O$.
\end{Lemma}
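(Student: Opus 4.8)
The plan is to identify the ring of integers $D=\Z[\frac{1+\sqrt{-3}}{2}]$ with the Eisenstein integers and exploit the explicit structure of its unit group. Recall that $\omega = \frac{1+\sqrt{-3}}{2}$ is a primitive sixth root of unity, satisfying $\omega^2 - \omega + 1 = 0$, so $D = \Z[\omega]$ has unit group $D^* = \{\pm 1, \pm\omega, \pm\omega^2\}$ of order six (the six sixth roots of unity). The subring $O = \Z[\sqrt{-3}] = \Z[2\omega - 1] = \Z[2\omega]$ consists exactly of those $m + n\sqrt{-3} = (m-n) + 2n\omega$ whose coefficient of $\omega$ is even; equivalently, writing $z = s + t\omega$ with $s,t \in \Z$, one has $z \in O$ if and only if $t$ is even. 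So the task reduces to showing that for every $z = s + t\omega \in D$ there is a sixth root of unity $u$ with $zu$ having even $\omega$-coefficient.

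The key step is a parity analysis of multiplication by the units. First I would record the effect of multiplying $s + t\omega$ by each unit, using the relation $\omega^2 = \omega - 1$: multiplication by $\omega$ sends $s + t\omega$ to $s\omega + t\omega^2 = -t + (s+t)\omega$, and multiplication by $-1$ just flips signs, which does not change parities. Thus, up to sign, the three relevant transformations of the coefficient pair $(s,t) \bmod 2$ are $(s,t) \mapsto (s,t)$, $(s,t)\mapsto (t, s+t)$, and $(s,t) \mapsto (s+t, s)$ (the last from $\omega^2$). The claim is that starting from any $(s,t) \not\equiv (0,0)$, at least one of these lands in a pair whose second coordinate is even. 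I would check this by running through the three nonzero residue classes $(s,t) \bmod 2 \in \{(1,0),(0,1),(1,1)\}$: the class $(1,0)$ already has even $t$; for $(0,1)$, multiplying by $\omega^2$ gives second coordinate $s \equiv 0$; and for $(1,1)$, multiplying by $\omega$ gives second coordinate $s \equiv 1$ while multiplying by $\omega^2$ gives $s \equiv 1$—so I would instead note that $(1,1)\mapsto(t,s+t)=(1,0)$ under $\omega$, which has even second coordinate. The case $z = 0$ is trivial. This finishes the existence of $u$.

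The only subtlety I anticipate is bookkeeping in the coordinate system: one must be careful that the condition "$z \in O$" corresponds precisely to the $\omega$-coefficient being even in the basis $\{1,\omega\}$, and that the unit multiplications act on $(s,t)$ exactly as the companion matrix of $\omega$ dictates. Since $D^*$ is finite and the parity condition only depends on $(s,t) \bmod 2$, the whole argument is a finite case check in $(\Z/2\Z)^2$, so there is no genuine obstacle—the work is purely in setting up the identification and verifying the three nonzero residue classes. An equivalent and perhaps cleaner phrasing I would consider is norm-theoretic: the index $[D:O] = 2$ and the coset $O$ inside $D/\!\sim$ (under the unit action) must be hit because the six units act on $D/2D \cong (\Z/2\Z)^2$ in a way that visits the three nonzero classes, one of which is the class of $\sqrt{-3}$-elements; but I expect the direct parity computation above to be the shortest route to the stated conclusion.
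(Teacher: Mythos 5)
Your proposal is correct and is essentially the paper's own argument: both proofs exploit the fact that $D^*$ is the group of six sixth roots of unity and reduce the statement to a finite residue check after multiplying by $\omega^{\pm 1} = \frac{1\pm\sqrt{-3}}{2}$; your choice of the integral basis $\{1,\omega\}$, in which $O$ is cut out by the parity of the $\omega$-coefficient, merely turns the paper's mod-$4$ check on the odd integers $a,b$ in $z=\frac{a+b\sqrt{-3}}{2}$ into a slightly cleaner mod-$2$ check on three residue classes. One cosmetic slip: in the $(1,1)$ case, multiplication by $\omega$ sends the second coordinate to $s+t\equiv 0$, not to $s$, so your sentence momentarily asserts the wrong thing before the self-correction --- but the final case assignment ($u=\omega$ for $(1,1)$, $u=\omega^2$ for $(0,1)$, $u=1$ otherwise) is right and the proof stands.
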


\begin{proof}
We recall that $D^*=\{\pm1,\frac{\pm1\pm\sqrt{-3}}{2}\}$, namely, the multiplicative group of the $6$th roots of unity. If $z\in O$ the claim follows immediately. Let $z\in D\setminus O$; then we may write $z=\frac{a+b\sqrt{-3}}{2}$, for suitable integers $a, b$, with $a\equiv b\equiv 1\pmod 2$. We have
$$
\frac{a+b\sqrt{-3}}{2}\cdot \frac{1 + \sqrt{-3}}{2} = \frac{( a - 3b)+( a + b)\sqrt{-3}}{4}
$$
and
$$
\frac{a+b\sqrt{-3}}{2}\cdot \frac{1 - \sqrt{-3}}{2} = \frac{( a + 3b) - ( a - b)\sqrt{-3}}{4}
$$
Looking at the residue classes modulo $4$, a direct check shows that either $ a - 3b \equiv  a + b \equiv 0 \pmod 4$ or $ a + 3b \equiv  a - b \equiv 0 \pmod 4$, for any possible choice of the odd integers $a,b$. We conclude that $z u \in O$ for some $u \in D^*$. \qed
\end{proof}

We remark that $\Q(\sqrt{-1})$ and $\Q(\sqrt{-3})$ are the only imaginary quadratic number fields which contains roots of unity distinct from $\pm1$.

\begin{Theorem}\label{primeidealsZsqrt-3-7}
Let $\eta = \sqrt{-d}$, where $d\in\{3,7\}$. Let $P \subset\Z[\eta] = O$ be a prime ideal containing an odd prime $p$. Then $P$ is principal.
\end{Theorem}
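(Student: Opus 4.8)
# Proof Proposal for Theorem \ref{primeidealsZsqrt-3-7}

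The plan is to show that a prime ideal $P$ of $O = \Z[\eta]$ containing an odd prime $p$ is principal, treating $d=3$ and $d=7$ together as much as possible. The starting observation is that since $p$ is odd, $P$ is comaximal with the conductor $\mathfrak{f}$ (which contains $2$, as recalled at the start of this section), so $P$ is a \emph{regular} prime ideal. By Remark \ref{idealscoprimeconductor}, regular ideals of $O$ correspond bijectively to regular ideals of the integral closure $D$, with $P = (P \cap O)$ being the contraction of the extended ideal $PD$, and $PD$ a prime ideal of $D$. Since $D$ is a PID in both cases ($d=3,7$ are among the values for which the ring of integers is a PID), the extended ideal $PD$ is principal, say $PD = \zeta D$ for some $\zeta \in D$. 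The whole difficulty is thus reduced to descending this principal generator from $D$ down to $O$: a priori $\zeta \in D \setminus O$, and we must replace it by a generator lying in $O$ itself.

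First I would make the reduction precise: write $P = \zeta D \cap O$, and observe that because $P$ is regular we have $PD \cap O = P$, so it suffices to find a generator of $PD$ that already lies in $O$. The natural tool is multiplication by a unit of $D$: if we can find $u \in D^*$ with $\zeta u \in O$, then $\zeta u$ generates $PD$ over $D$, and contracting gives $P = \langle \zeta u \rangle_O$ as a principal ideal of $O$ (one checks the containment $\langle \zeta u\rangle_O \subseteq P$ is an equality again by regularity, since both contract the same regular ideal $PD$). This is exactly where Lemma \ref{sqrt-3} enters for the case $d=3$: that lemma asserts precisely that every $z \in D$ admits a unit $u \in D^*$ with $zu \in O$, because $D$ has the extra sixth roots of unity $\frac{\pm 1 \pm \sqrt{-3}}{2}$ and the residue-class computation modulo $4$ shows one of the two relevant products always lands in $O$. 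So for $d=3$ the theorem follows immediately by applying Lemma \ref{sqrt-3} to $\zeta$.

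The main obstacle is the case $d=7$, where $D = \Z[\frac{1+\sqrt{-7}}{2}]$ has only the trivial units $D^* = \{\pm 1\}$, so the unit-multiplication trick from Lemma \ref{sqrt-3} is unavailable. Here I expect the argument must exploit the specific arithmetic of $O = \Z[\sqrt{-7}]$ more directly. My plan is to argue via norms: the prime $p$ splits or ramifies in $D$, and $PD = \zeta D$ gives $N(\zeta) = \pm p$ (up to the appropriate power), so I would look for an element $\alpha = x + y\sqrt{-7} \in O$ of norm $x^2 + 7y^2 = p$ (or the relevant small multiple), and verify that $\langle \alpha \rangle_O = P$. Concretely, since $PD$ is generated by $\zeta = \frac{m + n\sqrt{-7}}{2}$ with $m \equiv n \pmod 2$, and $D^* = \{\pm1\}$ offers no flexibility, I would instead exhibit $\alpha \in O$ of the correct norm and argue that it generates the same prime, using that $P$ lies over the odd prime $p$ and that there is a unique prime of $O$ over $p$ containing the given residue data. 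The delicate point to handle carefully is ensuring that such an $\alpha \in O$ (not merely in $D$) with the right norm genuinely exists and generates $P$ rather than its conjugate; this may require examining the reduction of $X^2 + 7$ modulo $p$ and matching the factor corresponding to $P$, rather than relying on a uniform unit-multiplication statement as in the $d=3$ case.
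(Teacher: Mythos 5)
Your reduction to the split case and your treatment of $d=3$ are exactly the paper's: $P$ is regular because $p$ is odd, hence contracted from $PD$, which is principal since $D$ is a PID, and Lemma \ref{sqrt-3} supplies a unit moving the generator into $O$. (You elide the inert and ramified cases, but there the generator $p$, resp.\ $\sqrt{-d}$, is visibly in $O$, so nothing is lost.) The problem is $d=7$: there you correctly identify the obstacle ($D^*=\{\pm 1\}$, so no unit adjustment is possible) but you do not overcome it. Your plan --- ``look for an element $\alpha = x+y\sqrt{-7}\in O$ of norm $p$ and verify that it generates $P$ rather than its conjugate, perhaps by examining the factorization of $X^2+7$ modulo $p$'' --- is left entirely unexecuted, and it is not a routine verification: the existence of such an $\alpha$ is precisely the assertion that the odd prime $p$ is represented by the form $x^2+7y^2$ and not only by $\frac{a^2+7b^2}{4}$, which is the whole content of the case.

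The missing idea is a one-line congruence, and it dissolves the conjugate worry you raise. Write the generator you already have as $\zeta=\frac{a+b\sqrt{-7}}{2}$ with $a\equiv b \pmod 2$ and $N(\zeta)=p$, i.e.\ $a^2+7b^2=4p$. If $a,b$ were both odd, then $a^2\equiv b^2\equiv 1 \pmod 8$, so $a^2+7b^2\equiv 8\equiv 0\pmod 8$, forcing $8 \mid 4p$, impossible for $p$ odd. Hence $a,b$ are both even and $\zeta$ itself already lies in $\Z[\sqrt{-7}]$; no new element need be constructed, and since $\zeta$ generates $PD$ there is no ambiguity between $P$ and its conjugate. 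This is how the paper closes the case, and without this (or an equivalent representation-theoretic fact about the form $x^2+7y^2$) your proposal does not prove the theorem for $d=7$.
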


\begin{proof}
Let $D=\Z[\frac{1+\eta}{2}]$ be the integral closure of $O$. Let $P$ be any prime ideal of $O$ containing an odd prime $p$. In particular, $P$ is regular and so it is a contracted ideal, namely, $PD\cap O=P$ (see Remark \ref{idealscoprimeconductor}).  We firstly examine the case when $p=d$. Then $p$ is ramified in $D$, and the unique prime ideal of $D$ above $p$ is $\sqrt{-p}D$. It follows that $P$ is principal, equal to $\sqrt{-p} \cdot O$. Suppose now that $p$ is an odd prime different from $d$. If $pD$ is a prime ideal, then $P$ is equal to $pD\cap O =pO$, and so $P$ is principal. Suppose that $p$ decomposes in $D$, so that $P$ is one of the two distinct prime ideals above $p$ in $O$. We know that $PD$ has norm $p$ (and so $P$ as well, since $p \ne 2$). Since $D$ is an Euclidean domain, it follows that $PD$ is principal, generated by an element $\alpha\in D$ of norm $p$. 

We distinguish now the two cases.
\begin{itemize}
\item[i)] $d=3$
\end{itemize}
By Lemma \ref{sqrt-3}, we may multiply $\alpha$ by a suitable unit of $\Z[\frac{1+\sqrt{-3}}{2}]$, to get an element $\alpha'\in \Z[\sqrt{-3}]$ which is associated to $\alpha$. In particular, $\alpha'$ is a generator of $PD$ which lies in $\Z[\sqrt{-3}]$, so that $\alpha'O = \alpha' D \cap O = PD \cap O = P$.

\medskip

\begin{itemize}
\item[ii)] $d=7$
\end{itemize}
Since $\alpha$ is an element of $\Z[\frac{1+\sqrt{-7}}{2}]$, we may write $\alpha=\frac{a+b\sqrt{-7}}{2}$, for suitable integers $a, b$, with $a\equiv b\pmod 2$. We have $N(\alpha)=\frac{a^2+7b^2}{4}=p$, or, equivalently,
\begin{equation}\label{equation}
a^2+7b^2=4p
\end{equation}
If $a\equiv b\equiv1\pmod 2$, then $a^2\equiv b^2\equiv 1\pmod 8$, so, looking at (\ref{equation}), we get $0 \equiv 4p \pmod 8$, a contradiction. This means that, necessarily,  $a\equiv b\equiv0\pmod 2$, i.e., the generator $\alpha$ of $PD$ actually in $\Z[\sqrt{-7}]$, hence $P=\alpha\Z[\sqrt{-7}]$ follows.
\qed
\end{proof}

\begin{Corollary}\label{cor1}
Let $d \in \{ 3, 7 \}$. Let $I\subset\Z[\sqrt{-d}]$ be a regular ideal. Then $I$ is principal. 
\end{Corollary}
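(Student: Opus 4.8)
The plan is to reduce the statement about regular ideals to the statement about regular prime ideals, which is exactly what Theorem \ref{primeidealsZsqrt-3-7} provides. The engine for this reduction is Corollary \ref{OPrinc} together with Remark \ref{idealscoprimeconductor}, so the real work is to verify that $\Z[\sqrt{-d}]$ satisfies the hypotheses needed to invoke them, namely that the regular primes are precisely the primes $P$ containing an odd prime $p$, so that Theorem \ref{primeidealsZsqrt-3-7} indeed covers every regular prime.

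First I would recall that for $d \in \{3,7\}$ the conductor is $\mathfrak{f} = \langle 2, 1+\eta\rangle$, which lies over the rational prime $2$. Hence a prime ideal $P$ of $O$ is regular, i.e. comaximal with $\mathfrak{f}$, if and only if $P$ does not contain $2$; equivalently, the rational prime lying below $P$ is odd. This identifies the regular primes of $O$ with exactly the prime ideals to which Theorem \ref{primeidealsZsqrt-3-7} applies, so by that theorem every regular prime ideal of $O$ is principal.

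Next I would pass from primes to arbitrary regular ideals. Since $O$ is an order, hence a one-dimensional Noetherian domain, the ideal $I$ has a unique primary decomposition $I = \prod_j Q_j$ into primary ideals. Because $I$ is regular (comaximal with $\mathfrak{f}$), each primary component $Q_j$ is regular as well, so by Lemma \ref{Neu} each $Q_j$ is a power $P_j^{e_j}$ of its radical, and each such $P_j$ is a regular prime. By the previous paragraph every $P_j$ is principal, whence each $P_j^{e_j}$ is principal and therefore $I$, being a product of principal ideals, is principal. This is precisely the implication ``2) $\Rightarrow$ 1)'' recorded in Remark \ref{idealscoprimeconductor}, so I could alternatively just cite that remark once Theorem \ref{primeidealsZsqrt-3-7} establishes condition 2).

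I do not expect any serious obstacle here: the statement is essentially a formal consequence of Theorem \ref{primeidealsZsqrt-3-7} combined with the factorization of regular ideals into powers of regular primes (Lemma \ref{Neu}). The only point requiring a moment's care is the bookkeeping identification of ``regular prime'' with ``prime containing an odd prime $p$'', which hinges on the explicit fact that the conductor sits over $2$; once that is noted, the argument of Corollary \ref{OPrinc} applies verbatim.
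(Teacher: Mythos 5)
Your proposal is correct and follows exactly the paper's route: the paper's proof of Corollary \ref{cor1} is simply the citation of Remark \ref{idealscoprimeconductor} together with Theorem \ref{primeidealsZsqrt-3-7}, and your argument spells out precisely those two ingredients (regular primes are the primes over odd rational primes since $\mathfrak{f}$ is the unique prime over $2$, then the Lemma \ref{Neu} factorization into powers of regular primes). No gaps; you have merely made explicit the bookkeeping the paper leaves implicit.
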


\begin{proof}
The proof follows by Remark \ref{idealscoprimeconductor} and by Theorem \ref{primeidealsZsqrt-3-7}. \qed
\end{proof}

\begin{Corollary}\label{Z-3Z-7Princ}
The orders $\Z[\sqrt {- 3}]$ and $\Z[\sqrt {- 7}]$ are PRINC domains. 
\end{Corollary}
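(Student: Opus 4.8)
The plan is to reduce the (princ) property to the fact, already established in Corollary \ref{cor1}, that every regular ideal of $O=\Z[\sqrt{-d}]$, $d\in\{3,7\}$, is principal, by exploiting the comaximal-factorization description of idempotent-pair ideals. Let $I$ be an ideal generated by an idempotent pair. By Theorem \ref{idempotents}, condition (3), there is an (integral) ideal $J$ of $O$ comaximal with $I$ such that $IJ=cO$ is principal; moreover $I$ and $J$ are invertible. The goal is then simply to deduce that $I$ is principal.

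The structural observation I would rely on is the one recorded at the start of this section: the conductor $\mathfrak{f}=\langle 2,1+\eta\rangle$ is a \emph{maximal} ideal of $O$ (here $d\equiv 3\pmod 4$). Consequently $\mathfrak{f}$ is the unique non-regular nonzero prime of $O$, and more generally an ideal of $O$ fails to be regular precisely when it is contained in $\mathfrak{f}$: if an ideal $A$ is not comaximal with $\mathfrak{f}$, then $A+\mathfrak{f}=\mathfrak{f}$ by maximality, so $A\subseteq\mathfrak{f}$, and conversely.

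I would then argue by cases according to whether $I$ is regular. If $I$ is regular, then $I$ is principal at once by Corollary \ref{cor1}. If $I$ is not regular, then $I\subseteq\mathfrak{f}$; since $I+J=O$ while $\mathfrak{f}$ is proper, $J$ cannot also be contained in $\mathfrak{f}$, so $J+\mathfrak{f}=O$, i.e. $J$ is regular. By Corollary \ref{cor1} the regular ideal $J$ is principal, say $J=c'O$, and then from $IJ=cO$ together with the invertibility of the principal ideal $c'O$ one gets $I=(IJ)J^{-1}=(c/c')O$, which is principal. In either case $I$ is principal, so $O$ satisfies (princ), proving the Corollary.

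The argument is short once the right inputs are assembled; the one point that requires genuine care — the step I expect to be the main (if modest) obstacle — is the treatment of the non-regular case. The key is to notice that the comaximality of $I$ and $J$ coming from Theorem \ref{idempotents}, combined with the \emph{maximality} of the conductor, forces at least one of the two factors to be regular, which is exactly what allows Corollary \ref{cor1} to take over. Absent the maximality of $\mathfrak{f}$ one would instead be obliged to analyze invertible $\mathfrak{f}$-primary ideals directly, a considerably more delicate task.
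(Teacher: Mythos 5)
Your proposal is correct and follows essentially the same route as the paper's own proof: the paper likewise uses the maximality of the conductor $\mathfrak{f}$ together with Corollary \ref{cor1} to conclude that a non-principal idempotent-pair ideal $\langle a,b\rangle$ would have to lie in $\mathfrak{f}$, forcing the comaximal ideal $\langle 1-a,b\rangle$ to be regular and hence principal, and then recovers the principality of $\langle a,b\rangle$ from the factorization $\langle a,b\rangle\langle 1-a,b\rangle = bO$. The only cosmetic differences are that the paper works with the explicit complementary ideal $\langle 1-a,b\rangle$ from the proof of Theorem \ref{idempotents} rather than an abstract $J$ supplied by condition (3), and phrases the argument as a proof by contradiction rather than a case split.
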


\begin{proof}
Let $O = \Z[\sqrt{-d}]$, where $d\in\{3,7\}$. Take any idempotent pair $a, b \in O$; assume, without loss of generality, that $a(1-a) = b c$, for some $c \in O$. Let us show that $\langle a, b \rangle$ is a principal ideal of $O$.
Assume, for a contradiction, that $\langle a, b \rangle$ is not principal. Then Corollary \ref{cor1} shows that $\langle a, b \rangle$ is contained in the conductor $\mathfrak{f}$, because $\mathfrak{f}$ is a maximal ideal. Then, obviously, the ideal $\langle 1 - a, b \rangle$ is comaximal with $\mathfrak{f}$, hence it is principal in $O$. However, recall that
$\langle a, b \rangle \langle 1- a, b \rangle  = b O$ (cf. the proof of Theorem \ref{idempotents}).
Since $\langle 1- a, b \rangle$ is principal, it follows that also $\langle a, b \rangle$ is principal, impossible. \qed
\end{proof}

We recall that U. Zannier privately communicated, to the third author, a first direct proof of the fact that $\Z[\sqrt{-3}]$ is a PRINC domain.

Since by Corollary the rings \ref{Z-3Z-7Princ} $\Z[\sqrt{-3}]$ and $\Z[\sqrt{-7}]$ are PRINC domains of finite character, by Corollary \ref{PruferfinitePrinc}  each projective ideal (hence, invertible) of these domains is principal. In the next proposition we give an {\it ad hoc} argument of this result.

\begin{Proposition}\label{invertibleideals}
Let $O$ be either $\Z[\sqrt{-3}]$ or $\Z[\sqrt{-7}]$. Then every invertible ideal of $O$ is principal.
\end{Proposition}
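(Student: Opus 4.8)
The goal is to show that every invertible ideal $I$ of $O = \Z[\sqrt{-3}]$ or $O = \Z[\sqrt{-7}]$ is principal. The plan is to reduce the problem to the regular case already handled by Corollary \ref{cor1}, exploiting the very special structure of the conductor $\mathfrak{f} = \langle 2, 1+\eta \rangle$ established at the start of this section. The key structural fact to keep in mind is that $\mathfrak{f}$ is a maximal ideal which is \emph{not} invertible (indeed $\mathfrak{f}^2 = 2\mathfrak{f}$), so $\mathfrak{f}$ itself can never appear as a factor of an invertible ideal.

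First I would invoke the primary decomposition of $I$ as an ideal of the one-dimensional Noetherian domain $O$, writing $I$ as a product of its pairwise-comaximal primary components. Each such component is $P$-primary for some maximal ideal $P$. The crucial dichotomy is whether $P = \mathfrak{f}$ or $P \ne \mathfrak{f}$. If every primary component of $I$ has radical different from $\mathfrak{f}$, then $I$ is regular (comaximal with $\mathfrak{f}$), and Corollary \ref{cor1} immediately gives that $I$ is principal. So the whole difficulty is concentrated in the possible $\mathfrak{f}$-primary component.

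The main obstacle, then, is to rule out a genuine $\mathfrak{f}$-primary factor in an \emph{invertible} ideal. Here I would argue that since $\mathfrak{f}$ is not invertible and $\mathfrak{f}^2 = 2\mathfrak{f}$, the localization $O_{\mathfrak{f}}$ is not a valuation domain (equivalently, $\mathfrak{f}$ is not regular, consistent with Lemma \ref{Neu}), and an invertible ideal must localize to a principal, hence invertible, ideal at every maximal ideal. The plan is to show that the $\mathfrak{f}$-primary component $Q$ of $I$ would have to be invertible in $O$; but any $\mathfrak{f}$-primary invertible ideal, localized at $\mathfrak{f}$, gives a principal ideal of $O_{\mathfrak{f}}$, and one then checks that the only such possibility forces $Q$ to be trivial or to coincide with a power arrangement incompatible with $\mathfrak{f}^2 = 2\mathfrak{f}$. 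Concretely, I expect to use that the $\mathfrak{f}$-primary ideals are not linearly ordered (they fail to be regular), so an invertible $I$ cannot have $\mathfrak{f}$ in its radical support at all.

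Having excluded the $\mathfrak{f}$-primary component, I conclude that every invertible ideal of $O$ is regular, and therefore principal by Corollary \ref{cor1}. In writing this up I would present the localization-at-$\mathfrak{f}$ computation carefully, since that is the only place where the specific relation $\mathfrak{f}^2 = 2\mathfrak{f}$ and the non-invertibility of $\mathfrak{f}$ do the real work; the reduction via primary decomposition and the appeal to Corollary \ref{cor1} are routine once that local obstruction is in place.
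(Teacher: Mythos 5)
Your reduction to the $\mathfrak{f}$-primary case via primary decomposition matches the paper's first step, but the key claim you then plan to prove is false: an invertible ideal of $O$ \emph{can} have an $\mathfrak{f}$-primary component, so invertible ideals need not be regular. Indeed $2O$ is principal, hence invertible, and its radical is $\mathfrak{f}$ (modulo $2$ we have $X^2+d \equiv (X+1)^2$, so $\mathfrak{f}$ is the unique prime of $O$ containing $2$); thus $2O$ is a proper $\mathfrak{f}$-primary invertible ideal. The same holds for $(1+\eta)O$, whose norm $1+d$ is a power of $2$. The facts you want to lean on --- that $\mathfrak{f}$ is not invertible, that $\mathfrak{f}^2 = 2\mathfrak{f}$, and that the $\mathfrak{f}$-primary ideals are not linearly ordered (Lemma \ref{Neu}) --- only say that $O_{\mathfrak{f}}$ is not a valuation domain (not a DVR); they do not prevent principal, a fortiori invertible, proper $\mathfrak{f}$-primary ideals from existing, since any nonzero nonunit of $O_{\mathfrak{f}}$ generates one. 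So the statement you aim to prove, that ``an invertible $I$ cannot have $\mathfrak{f}$ in its radical support at all,'' is simply not true, and your concluding claim that every invertible ideal of $O$ is regular is contradicted by $2O$.

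What the paper actually does at this point is prove the positive statement that an invertible $\mathfrak{f}$-primary ideal $Q$ is \emph{principal}, not that it cannot exist. Localizing at $\mathfrak{f}$, invertibility gives $Q_{\mathfrak{f}} = aO_{\mathfrak{f}}$ with $a \in Q$ (local domains are projective-free). By Corollary \ref{cor1} every element of $O \setminus \mathfrak{f}$ is a product of prime elements, so one may write $aO = sQ_1$ with $s \notin \mathfrak{f}$ and $Q_1 = a_1O$ an $\mathfrak{f}$-primary principal ideal, where $a = s a_1$. Then a denominator-clearing argument finishes: any $b \in Q$ can be written as $b = a_1 y/t$ with $t = \prod_{i=1}^{n} p_i$ a product of primes not in $\mathfrak{f}$, and since $a_1 \notin p_iO$ one cancels the $p_i$ one at a time to get $y/t \in O$, hence $b \in a_1O$ and $Q = a_1O$. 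This descent step --- promoting the local generator at $\mathfrak{f}$ to a global generator, using the factorization into primes of elements outside $\mathfrak{f}$ --- is the real content of the proposition, and it is exactly what your outline replaces by a contradiction argument that is not available.
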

\begin{proof}
Take an arbitrary invertible ideal $I$ of $O$. Corollary \ref{cor1} shows that every ideal of $O$ not contained in the conductor $\mathfrak f$ is principal. By the unique factorization of ideals of $O$ into primary ideals, it follows that any element $s \in O \setminus \mathfrak f$ is a product of prime elements of $O$, and any ideal $I$ contained in $\mathfrak f$ has the form $I = s Q$, where $s \notin \mathfrak f$ and $Q$ is $\mathfrak f$-primary. Therefore, to prove our statement, it suffices to consider the case when $I = Q$ is $\mathfrak f$-primary. 

We must show that the invertible ideal $Q$ is principal. Let us consider the localization $O_{\mathfrak f}$ of $O$ at $\mathfrak f$. Then the extended ideal $Q_{\mathfrak f}$ is invertible in $O_{\mathfrak f}$, hence it is principal (since local domains are projective-free), say $Q_{\mathfrak f} = a O_{\mathfrak f}$, where we may take $a \in Q$.

By the unique factorization of ideals of $O$ into primary ideals, we readily get $aO = s Q_1$, where $Q_1$ is $\mathfrak f$-primary, and $s \in O \setminus \mathfrak f$, so it is a product of prime elements of $O$. Say $a = s a_1$; then $a_1 O$ is $\mathfrak f$-primary, equal to $Q_1$,  and we have $Q_{\mathfrak f} = a_1 O_{\mathfrak f}$. 

Pick now any element $b \in Q$. Then $b = a_1 y/t$, for some $t \in O \setminus \mathfrak f$. Say $t = \prod_1^n p_i$, where the $p_i$ are prime elements of $O$. Then from $t b = a_1 y$ and $a_1 \notin p_i O$ we get $y/p_i \in O$, for $i = 1, \dots, n$. It readily follows that $y/t \in O$, whence $b \in a_1 O$. Since $b \in Q$ was arbitrary, we conclude that $Q = a_1 O$, as required.  \qed
\end{proof}

We summarize the previous results in a final statement.
\medskip
\begin{Theorem} \label{final}
Let $d > 0$ be a square-free integer. Then $\Z[\sqrt{-d}]$ does not satisfy property (princ), except $\Z[\sqrt{-1}]$, $\Z[\sqrt{-2}]$, which are PIDs, and $\Z[\sqrt{-3}]$, $\Z[\sqrt{-7}]$.
\end{Theorem}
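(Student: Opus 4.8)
The plan is to assemble the results of Sections 2, 3, and 4 into a single finite case analysis, the backbone being the reduction provided by Corollary \ref{OPrincDPrinc}. First I would observe that if $\Z[\sqrt{-d}]$ is PRINC, then by Corollary \ref{OPrincDPrinc} its integral closure $D$ must be a PID, i.e. the imaginary quadratic field $\Q(\sqrt{-d})$ has class number one. Invoking the classical classification of imaginary quadratic fields of class number one (the Baker--Heegner--Stark theorem, see \cite{Ri}, p.~81), this forces $d \in \{1,2,3,7,11,19,43,67,163\}$. Hence for every square-free $d>0$ outside this finite list, $\Z[\sqrt{-d}]$ fails to be PRINC, and it remains only to decide the nine exceptional values.

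Next I would split the nine values according to the shape of the ring of integers. For $d=1,2$ one has $d \not\equiv 3 \pmod 4$, so $O = \Z[\sqrt{-d}]$ coincides with its integrally closed ring of integers $D$, which is a PID; a PID is trivially PRINC (cf.\ Corollary \ref{DedekindPrinc}). The remaining seven values all satisfy $d \equiv 3 \pmod 4$, so $O = \Z[\sqrt{-d}]$ is a proper order whose integral closure is the PID $D = \Z[\frac{1+\sqrt{-d}}{2}]$.

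For $d \in \{11,19,43,67,163\}$ I would apply Proposition \ref{non-princ}, which exhibits in each of these orders a regular prime ideal generated by an idempotent pair that is nonetheless not principal; hence these orders are not PRINC. For $d \in \{3,7\}$ I would instead invoke Corollary \ref{Z-3Z-7Princ}, which establishes directly that $\Z[\sqrt{-3}]$ and $\Z[\sqrt{-7}]$ are PRINC. Collecting the four strata --- the infinitely many $d$ outside the list, the PID cases $d=1,2$, the failing proper orders $d \in \{11,19,43,67,163\}$, and the exceptional PRINC orders $d \in \{3,7\}$ --- yields exactly the stated dichotomy.

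There is no genuine internal obstacle here: every substantive implication has already been proved in the body of the paper, and the theorem is a synthesis. The one ingredient external to the paper's own machinery is the deep Baker--Heegner--Stark determination of the class-number-one imaginary quadratic fields, which underwrites the reduction to the finite list and is quoted from \cite{Ri} rather than reproved. The only point demanding a little care is recording the congruence $d \equiv 3 \pmod 4$ correctly for the seven proper orders, so that $d=1,2$ are properly isolated as the integrally closed (PID) cases and not mistakenly treated alongside the proper orders.
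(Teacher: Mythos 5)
Your proposal is correct and matches the paper's own (implicit) argument: the paper states Theorem \ref{final} as a summary of exactly the chain you assemble, namely the reduction via Corollary \ref{OPrincDPrinc} to the class-number-one list $d \in \{1,2,3,7,11,19,43,67,163\}$ quoted from \cite{Ri}, the trivial PID cases $d=1,2$, the failure for $d \in \{11,19,43,67,163\}$ by Proposition \ref{non-princ}, and the positive cases $d \in \{3,7\}$ by Corollary \ref{Z-3Z-7Princ}. There is nothing missing in your synthesis.
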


We remark that the next proposition can be proved using, instead of the UCS property, Corollary 2.6 in \cite{H}, quoted by Heitmann as "Serre's Theorem".

\begin{Proposition} 
The two rings $\Z[\sqrt{-3}]$ and $\Z[\sqrt{-7}]$ are projective-free.
\end{Proposition}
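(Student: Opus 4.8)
The plan is to show directly that every finitely generated projective $O$-module is free, for $O \in \{\Z[\sqrt{-3}], \Z[\sqrt{-7}]\}$, which is exactly the definition of projective-freeness. Let $P$ be such a module; since $O$ is a domain, $P$ is torsion-free and has a well-defined rank $n$. The cases $n=0$ (where $P=0$) and $n=1$ are immediate: a rank-one finitely generated projective module over a domain is isomorphic to an invertible (a priori fractional, but after clearing denominators integral) ideal, and by Proposition \ref{invertibleideals} every invertible ideal of $O$ is principal, so $P \cong O$.

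For $n \ge 2$ I would invoke Serre's splitting theorem in the form of Corollary 2.6 of \cite{H} (the alternative to the UCS property mentioned in the remark above). Since an order is a one-dimensional Noetherian domain (Noetherian by Eakin's theorem \cite{Ea}), any finitely generated projective module of rank strictly greater than the Krull dimension admits a free direct summand, giving $P \cong O \oplus P'$ with $P'$ projective of rank $n-1$. Iterating this splitting down to rank one yields a decomposition $P \cong O^{n-1} \oplus I$, where $I$ is a rank-one projective module, that is, an invertible ideal of $O$. Applying Proposition \ref{invertibleideals} once more, $I$ is principal, so $I \cong O$ and therefore $P \cong O^{n}$ is free.

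The substantive input is contained entirely in Proposition \ref{invertibleideals} (the triviality of the Picard group of $O$) together with the dimension-theoretic splitting; once these are in place, no cancellation or determinant argument is needed, because the free summand $O$ can be substituted directly for the principal ideal $I$ in the direct sum. The only points requiring care are the verification that the hypotheses of Corollary 2.6 of \cite{H} apply to $O$ (it is a one-dimensional Noetherian domain) and the standard identification of rank-one finitely generated projective modules with invertible ideals. Neither of these is a genuine obstacle, so I expect the heart of the argument to be the already established Proposition \ref{invertibleideals}; the present proposition is essentially its translation from rank one to arbitrary rank via Serre's theorem.
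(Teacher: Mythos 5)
Your proof is correct, but it is not the proof the paper gives: it is precisely the alternative route that the paper mentions in the remark immediately preceding the proposition (``can be proved using, instead of the UCS property, Corollary 2.6 in \cite{H}, quoted by Heitmann as `Serre's Theorem'\,''). The paper's own argument runs differently: it observes that $O$ is almost local-global (its proper quotients are zero-dimensional, hence local-global), so by the Brewer--Klingler theorem \cite{BK} $O$ satisfies the UCS property, meaning every finitely generated submodule of a free module with unit content contains a rank-one projective direct summand. Given a finitely generated projective $M$ with content $I$, the paper first shows $I=O$ by a Nakayama argument (from $M\subseteq IF$ one gets $M=IM$, and torsion-freeness forces the annihilating element $1-a$ to vanish, i.e.\ $a=1$), then applies UCS to extract a rank-one projective summand, which by Proposition \ref{invertibleideals} must be cyclic, hence free, and concludes by induction on the rank. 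Your argument instead peels off free rank-one summands directly via Serre's splitting theorem --- legitimate here since $O$ is a one-dimensional Noetherian domain, so any projective of rank $\ge 2$ splits off a free summand --- and invokes Proposition \ref{invertibleideals} only once, at the end, to convert the residual rank-one projective (an invertible ideal) into a free module; as you note, no cancellation is needed since the isomorphism $I\cong O$ is substituted directly into the decomposition. Both proofs have the same substantive input, namely Proposition \ref{invertibleideals}; yours is the more classical and somewhat shorter dimension-theoretic argument, while the paper's UCS argument stays within the non-Noetherian module-theoretic framework of \cite{FS} and \cite{BK} and sidesteps any appeal to Serre-type splitting. Your proposal is complete as it stands.
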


\begin{proof}
Let $O$ be one of the two considered rings. $O$ is almost local-global, since its proper quotients are zero-dimensional, hence local-global. Then, by a result by Brewer and Klingler \cite{BK} (see also Theorem  4.7 in Chapter V of  \cite{FS}), $O$ satisfies the UCS-property, that is, every finitely generated submodule $M$ of a free module $F$ with unit content contains a rank-one projective direct summand of $F$, and hence of itself. By Proposition \ref{invertibleideals}, this direct summand must be cyclic. Let now assume that $M$ is finitely generated projective, and let $I$ be its content. Let $F=M \oplus N$ be a free module containing $M$ as a summand. Then $M \subseteq IF=IM \oplus IN$ implies that $M=IM$, so that, by Nakayama's lemma, there exists an element of $O$ of the form $1-a$, with $a \in I$, such that $(1-a)M= 0$. But $M$ is torsion-free, hence $a=1$ and $I= O$. By the UCS property, $M$ contains a cyclic summand $x O$, that is, $M = M \oplus xO$. Now an easy induction on the rank of the projective module shows that $M$ is free. \qed
\end{proof}

\medskip
The following question naturally arises: are there PRINC domains which are neither UFD, nor projective-free?  An example of a domain of this kind was exhibited in \cite[Section 4]{MS1}. The authors refer also to a paper by Gilmer \cite{GilmNote}, where an example of a domain containing an $n$-generated invertible ideal ($n$ an arbitrary positive integer) was given. In \cite[Remark p. 189]{MS1}, the authors show that the domain in the example of Gilmer is a PRINC domain which is neither UFD nor projective-free.\\

The following question is still open.\\

\noindent{\bf Question}: Let $R$ be a Pr\"ufer domain which is PRINC. Is $R$ a B\'ezout domain?\\

We remark that a positive answer was given in \cite[Corollary 1.9]{MS1} under the additional hypothesis that the Pr\"ufer domain is CFD.
\bigskip

\noindent{\bf Acknowledgements} Research supported by  ``Progetti di Eccellenza 2011/12" of Fondazione CARIPARO and by the grant "Assegni Senior" of the University of Padova.
We are grateful to the referee for suggesting many improvements to the paper, especially the useful characterization in Theorem \ref{idempotents}. 

\bigskip

\end{document}